\renewcommand{\Re}{\mathrm{Re}\,}
\renewcommand{\Im}{\mathrm{Im}\,}
\newcommand{\im}{\mathrm{Im}\,}
\newcommand{\E}{{\mathbb E }}
\newcommand{\R}{{\mathbb R }}
\newcommand{\N}{{\mathbb N}}
\renewcommand{\P}{{\mathbb P}}
\newcommand{\C}{{\mathbb C}}
\newcommand{\ii}{\mathrm{i}}
\newcommand{\deq}{\mathrel{\mathop:}=}
\newcommand{\dd}{\mathrm{d}}
\newcommand{\wt}{\widetilde}
\newcommand{\bs}{\boldsymbol}
\def\ea{e_{0}(\lambda)}
\def\Tr{\mathrm{Tr}}
\def\i{\text{i}}
\def\pzp{\frac{\partial}{\partial z'}}
\def\pzz{\frac{\partial}{\partial \overline{z}}}
\def\px{\frac{\partial}{\partial x}}
\def\py{\frac{\partial}{\partial y}}
\def\tf{\tilde{f}}
\def\ga{G^{(1)}}
\def\gb{G^{(2)}}
\def\Ttwo{T(z,z')}
\def\Tone{T(z,z)}
\renewcommand{\mathbf}[1]{\bs{#1}}
\def\<{\langle}
\def\>{\rangle}
\numberwithin{equation}{section}
\theoremstyle{plain}
\newtheorem{theorem}{Theorem}[section]
\newtheorem{lemma}[theorem]{Lemma}
\newtheorem{proposition}[theorem]{Proposition}
\newtheorem{definition}[theorem]{Definition}
\newtheorem{remark}[theorem]{Remark}
\newtheorem{assumption}[theorem]{Assumption}
\begin{document}

\begin{center}

 \begin{minipage}{0.70\textwidth}
 \vspace{2.5cm}
 
\begin{center}
\large\bf
On fluctuations of global and mesoscopic linear statistics of generalized Wigner matrices
\end{center}
\end{minipage}
\end{center}

\renewcommand{\thefootnote}{\fnsymbol{footnote}}	
\vspace{1cm}
\begin{center}
 \begin{minipage}{1.1\textwidth}

 \begin{minipage}{0.42\textwidth}
\begin{center}
Yiting Li  \\
		\footnotesize 
		{Aix-Marseille Universit\'e, CNRS}\\
{\it yiting.li@univ-amu.fr}
\end{center}
\end{minipage}
\begin{minipage}{0.05\textwidth}
	\begin{center}
	\quad\\
		\footnotesize 
		{ \text{}}\\
		{\it }
	\end{center}
\end{minipage}
\begin{minipage}{0.42\textwidth}
\begin{center}
Yuanyuan Xu\\
\footnotesize 
		{KTH Royal Institute of Technology}\\
{\it yuax@kth.se}
\end{center}
\end{minipage}
\end{minipage}
\end{center}

\renewcommand{\thefootnote}{\fnsymbol{footnote}}	

\vspace{1cm}

\begin{center}
 \begin{minipage}{0.83\textwidth}\footnotesize{
 {\bf Abstract.}
} 
We consider an $N$ by $N$ real or complex generalized Wigner matrix $H_N$, whose entries are independent centered random variables with uniformly bounded moments. We assume that the variance profile, $s_{ij}:=\mathbb{E} |H_{ij}|^2$, satisfies $\sum_{i=1}^Ns_{ij}=1$, for all $1 \leq j \leq N$ and $c^{-1} \leq N s_{ij} \leq c$ for all $ 1 \leq i,j \leq N$ with some constant $c \geq 1$. We establish Gaussian fluctuations for the linear eigenvalue statistics of $H_N$ on global scales, as well as  on all mesoscopic scales up to the spectral edges, with the expectation and variance formulated in terms of the variance profile. We subsequently obtain the universal mesoscopic central limit theorems for the linear eigenvalue statistics inside the bulk and at the edges respectively.
\end{minipage}
\end{center}

 \vspace{5mm}
 
 {\small
\footnotesize{\noindent\textit{Date}: \today}\\
\footnotesize{\noindent\textit{Keywords}: generalized Wigner matrix, mesoscopic eigenvalue statistics, central limit theorem}\\
\footnotesize{\noindent\textit{AMS Subject Classification (2010)}: 15B52, 60B20}
 
 \vspace{2mm}

 }

\section{Introduction}

\subsection{Linear eigenvalue statistics of Wigner matrices}

A Wigner matrix $H_N$ is an $N \times N$ matrix whose entries are independent real or complex valued random variables up to the symmetry constraint $H_N=H_N^*$. Wigner matrices with real or complex Gaussian entries are known as the $Gaussian$ $Orthogonal$ $Ensemble$ (GOE) and the $Gaussian$ $Unitary$ $Ensemble$ (GUE), respectively. The celebrated Wigner semicircle law \cite{wigner_annals} states that the empirical eigenvalue distribution of $H_N$ converges to the semicircle distribution with density $\rho_{sc}(x):=\frac{1}{2 \pi} \sqrt{4-x^2} \mathds{1}_{[-2,2]}$. More precisely, denoting by $(\lambda_i)_{i=1}^N$ the eigenvalues of $H_N$, for any sufficiently regular test function $f$, the linear statistics
$\frac{1}{N}\sum_{i=1}^N f(\lambda_i) - \int_{\R}f(x) \rho_{sc}(x)\dd x$ converges in probability to zero as $N\to \infty$, which can be understood as a Law of Large Numbers.

It is then natural to derive the corresponding Central Limit Theorem (CLT), i.e., the Gaussian fluctuations of the {\it linear eigenvalue statistics}
\begin{align}\label{linear_statistics_for_Wigner}
\sum\limits_{i=1}^Nf(\lambda_i)-\E\Big[ \sum\limits_{i=1}^Nf(\lambda_i)\Big].
\end{align}
The linear statistics (\ref{linear_statistics_for_Wigner}) need not be normalized by $N^{-\frac{1}{2}}$ as in the classical CLT, which can be explained by the strong correlations among eigenvalues. Khorunzhy,  Khoruzhenko and Pastur \cite{KKP_CLT} proved a CLT for the trace of the resolvent of Wigner matrices. Johansson \cite{Johansson} derived Gaussian fluctuations for the linear eigenvalue statistics of invariant ensembles, including the GUE and GOE. Bai and Yao \cite{baiwigner} used a martingale method to extend the CLTs to arbitrary Wigner matrices and analytic test functions. The regularity conditions on the test functions were weakened by Lytova and Pastur \cite{lytova+pastur}, Shcherbina \cite{M.Shcherbina} via the characteristic function of (\ref{linear_statistics_for_Wigner}), and more recently by Sosoe and Wong~\cite{SosoeWong} who obtained the CLT for $H^{1+\epsilon}$ test functions.

The fluctuations of the linear eigenvalue statistics on mesoscopic scales, i.e.,
\begin{align}\label{eq1}
\sum\limits_{i=1}^Nf\Big(\frac{\lambda_i-E_0}{\eta_0}\Big)-\mathbb E\Big[\sum\limits_{i=1}^Nf\Big(\frac{\lambda_i-E_0}{\eta_0}\Big)\Big],
\end{align}
with fixed energy $E_0 \in (-2,2)$ and scale parameter $N^{-1} \ll \eta_0 \ll 1$, were first studied by Boutet de Monvel and Khorunzhy \cite{meso1} for the GOE given the test function $f(x)=(x-\ii)^{-1}$. They subsequently extended their results to real Wigner matrices \cite{meso2} with $N^{-\frac{1}{8}} \ll \eta_0 \ll 1$. A Mesoscopic CLT for the GUE was obtained by Fyodorov, Khoruzhenko and Simm \cite{FKS}, and was extended by Lodhia and Simm \cite{mesowigner} to complex Wigner matrices on scales $N^{-1/3}\ll\eta_0\ll1$. He and Knowles \cite{moment} improved these CLTs on optimal mesoscopic scales $N^{-1}\ll\eta_0\ll1$ for all Wigner matrices. They also studied the two point correlation function of Wigner matrices on mesoscopic scales in \cite{HeKnowlesDensityCorrelation}. More recently, Landon and Sosoe \cite{character} obtained similar CLTs by studying the characteristic function of~(\ref{eq1}).

Mesoscopic linear eigenvalue statistics can also be studied at the spectral edges, where the mesoscopic scale ranges over $N^{-\frac{2}{3}} \ll \eta_0 \ll 1$. Basor and Widom \cite{basor+widom} used asymptotics of the Airy kernel to derive Gaussian fluctuations of the linear eigenvalue statistics of the GUE at the edges. Min and Chen \cite{Min+Chen} subsequently extended this result to the GOE. Adhikari and Huang \cite{Adhikari+Huang} proved the mesoscopic CLT for the Dyson Brownian motion at the edges down to the optimal scale $\eta_0 \gg N^{-\frac{2}{3}}$ in a short time. Recently, Schnelli and the authors \cite{Li+Schnelli+Xu} obtained mesoscopic CLT for deformed Wigner matrices at regular edges, where the spectral density has square-root behaviors. 

Besides Wigner matrices, mesoscopic CLTs were also obtained in many other random matrices ensembles, e.g., random band matrices \cite{erdos+knowles,erdos+knowles2}, sparse Wigner matrices~\cite{sparse}, Dyson Brownian motion \cite{Duits+Johansson,Huang+Landon,character2}, invariant $\beta$-ensembles \cite{Bekerman+lodhia, BEYY, lambert_1}, orthogonal polynomial ensembles \cite{Breuer+Duits}, classical compact groups \cite{Soshnikov}, circular $\beta$ ensembles \cite{lambert_2}, and free sum of matrices \cite{Bao+Schnelli+Xu}.  

\subsection{Generalized Wigner matrices}
In this paper, we are interested in the linear eigenvalue statistics for generalized Wigner matrices, which were introduced in \cite{Erdos+Yau+Yin}. Let $H_N=(H_{ij})_{i,j=1}^N$ be an $N$ by $N$ matrix with independent but not identically distributed centered random variables up to the symmetry constraint $H_{N}=H_N^*$. Denote by $S \equiv S_N$ the matrix of variances, i.e. $S:=(s_{ij})_{i,j=1}^N$, with $s_{ij}=\E |H_{ij}|^2$. We assume that $S$ is symmetric and doubly stochastic, i.e.,
\begin{equation}\label{normal}
\sum_{i=1}^N s_{ij}=1, \quad \mbox{for all }1 \leq j \leq N.
 \end{equation}
 We say $H_N$ is a generalized Wigner matrix if the size of $s_{ij}$ is comparable with $N^{-1}$, that is, there exists $c\geq1$ independent of $N$ such that 
 	\begin{equation}\label{flatness}
 	\quad c^{-1}\leq N s_{ij} \leq c,\quad\text{for all } 1 \leq i,j \leq N.
 	\end{equation}  
Standard Wigner matrices are a special case of generalized Wigner matrices, with $s_{ij}=N^{-1}$ for all $1 \leq i,j \leq N$. The first condition in (\ref{normal}) guarantees that the limiting spectral measure of $H_N$ is given by the semicircle law; see \cite{Anderson+Zeitouni,Guionnet, DK}. Without the condition (\ref{normal}), the limiting eigenvalue distribution is characterized by the Dyson equation and were classified in \cite{wigner_type_1}. Local laws of such general Wigner-type matrices were obtained in \cite{wigner_type_2, wigner_type_3} and bulk universality was then established in \cite{wigner_type_2}, while the edge and cusp universality were derived in \cite{wigner_type_3, isotropic3, Cusp1}.

The second assumption (\ref{flatness}) is a sufficient condition for generalized Wigner matrices to demonstrate the same local eigenvalue statistics as standard Wigner matrices. Universality for the local eigenvalue statistics of generalized Wigner matrices was obtained in \cite{BEYY,Erdos+Yau+Yin2,Erdos+Yau+Yin3} for the bulk and in \cite{BEY Edge,Erdos+Yau+Yin,Lee+Yin} for the edges. For random band matrices, the condition (\ref{flatness}) is not satisfied. We refer to \cite{BYJ_1, BYJ_2, Erdos+Knowles+Yau+Yin} for results on local laws and bulk universality, and to \cite{sodin} for edge universality.

Consider now a special variance matrix $S$ with $s_{ij}=\frac{1}{N}f\big(\frac{i}{N}, \frac{j}{N} \big)$, where $f \in C([0,1]\times [0,1])$ is a non-negative, symmetric function such that $\int_{0}^1 f(x,y)dy \equiv 1$.  A CLT for the linear eigenvalue statistics of such matrices was obtained in \cite{Anderson+Zeitouni} by studying its generating function via combinatorial enumeration, with the variance formulated as an infinite series. Global CLTs for random band matrices were obtained in \cite{Li+Sasha, jana1, Shcherbinaband}, while the mesoscopic linear statistics were studied in \cite{erdos+knowles,erdos+knowles2}.  Fluctuations of the linear eigenvalue statistics on global scales for many familiar classes of random matrices were also studied in \cite{poincare}, where a unified technique was formulated for deriving such CLTs using second order Poincar\'e inequalities, without an explicit formula for the variance. Under this framework, CLTs for linear eigenvalue statistics of Wigner matrices with general variance profiles were obtained in \cite{jana2}. Global fluctuations of block Gaussian matrices with variance profiles were proved within the framework of second-order free probability theory, see \cite{block_gaussian} and references therein. In addition, CLTs on global scales for large sample covariance matrices given a general variance profile were discussed in \cite{najim3}.

In the present paper, we consider generalized Wigner matrices with matrix of variances $S$ satisfying (\ref{normal}) and (\ref{flatness}). We derive Gaussian fluctuations for the linear eigenvalue statistics~\eqref{eq1}, with explicit integral formulas for the variance and expectation in terms of the matrix of variances $S$, at fixed energy~$E_0 \in [-2,2]$ on scales $N^{-1} \ll \eta_0 \leq 1$ such that $ \eta_0\sqrt{\eta_0+\kappa_0} \gg N^{-1}$, where $\kappa_0=\kappa_0(E_{0})$ denotes the distance from $E_0$ to the closest edge of the semicircle law; see Theorem~\ref{thm:weak_convergence}. This range of $\eta_0$ covers the global scales as well as all mesoscopic scales up to the spectral edges. Furthermore, we obtain the universal CLTs on all mesoscopic scales, for energies $E_0$ in the bulk and at the edges respectively, by computing the variances and expectations explicitly considering mesoscopic-scaled test functions; see Theorem \ref{meso}. The limiting law is universal, only depending on the symmetry class, and is independent of the scaling $\eta_0$ and the energy $E_0$.

The proof of our main technical result Proposition \ref{prop} is provided in Section \ref{sec:proof_of_proposition}. We follow the idea of~\cite{lytova+pastur, character} to study the characteristic function of the linear eigenvalue statistics~\eqref{eq1}. Via the Helffer-Sj\"ostrand functional calculus, we write the derivative of the characteristic function in terms of the resolvent of $H_N$, and then cut off the ultra-mesoscopic scales of the spectral domain, see (\ref{phi2}), since the very local scales do not contribute to the mesoscopic linear statistics. The benefit is that on the restricted spectral domain, the resolvent of $H_N$ is controlled effectively by the local laws \cite{locallaw,Erdos+Yau+Yin}. We subsequently apply the cumulant expansions (see Lemma \ref{cumulant}) to solve the right side of (\ref{phi2}). This technique was first used in random matrix theory by \cite{KKP_CLT} and in recent papers, e.g., \cite{isotropic2,moment, edge, lytova+pastur}. The key tools to estimate the error in Proposition \ref{prop} are the (isotropic) local laws for the resolvent \cite{Alex+Erdos+Knowles+Yau+Yin,isotropic3, isotropic2, yukun_antti} and the fluctuation averaging estimates \cite{Erdos+Knowles+Yau,Erdos+Knowles+Yau+Yin, short_proof,YY}. One of the main technical achivements is to find a weak local law for the two point function $T_{ab}(z,z'):=\sum_{j=1, j\neq b}^N s_{aj} G_{jb}(z) G_{jb}(z')$, with different spectral parameters $z,z'$; see Lemma \ref{tracet} with proof in Section \ref{sec:two_point_function}. Compared with the standard Wigner matrices \cite{moment, character}, the two point function $T_{ab}(z,z')$ cannot be written as a matrix product and hence the resolvent identity (\ref{resolvent}) or cyclicity of trace no longer help. Similar two point functions of the resolvents appeared in \cite{erdos_reference_1, erdos_reference_2, non_hermitian, Bao+Schnelli+Xu} to derive Gaussian fluctuations of the linear eigenvalue statistics for different random matrix ensembles. The proof of Lemma \ref{tracet} is inspired by the fluctuation averaging mechanism \cite{Erdos+Knowles+Yau}, combined with recursive moment estimates based on cumulant expansions. A special case $z=\bar{z}$ was studied previously in \cite{Erdos+Knowles+Yau, short_proof, YY}, and our statements are for arbitrary parameters $z,z' \in \C \setminus \R$. In addition, we end Section \ref{sec:proof_of_proposition} by estimating the expectation of the linear eigenvalue statistics and then complete the proof of Theorem \ref{thm:weak_convergence}.

{\it Notation:} We will use the following definition on high-probability estimates from~\cite{Erdos+Knowles+Yau}. 
\begin{definition}\label{definition of stochastic domination}
Let $\mathcal{X}\equiv \mathcal{X}^{(N)}$ and $\mathcal{Y}\equiv \mathcal{Y}^{(N)}$ be two sequences of nonnegative random variables. We say~$\mathcal{Y}$ stochastically dominates~$\mathcal{X}$ if, for all (small) $\epsilon>0$ and (large)~$D>0$,
\begin{align}\label{prec}
\P\big(\mathcal{X}^{(N)}>N^{\epsilon} \mathcal{Y}^{(N)}\big)\le N^{-D},
\end{align}
for sufficiently large $N\ge N_0(\epsilon,D)$, and we write $\mathcal{X} \prec \mathcal{Y}$ or $\mathcal{X}=O_\prec(\mathcal{Y})$.
\end{definition}
We often use the notation $\prec$ also for deterministic quantities, then~\eqref{prec} holds with probability one. Properties of stochastic domination can be found in Lemma \ref{dominant}.

For any vector $\mathbf v \in \C^{N}$, let $\|\mathbf v\|_{\sup}:=\max_{i=1}^N|v_i|$ be the sup norm. For any matrix $A \in \C^{N \times N}$, the matrix norm induced by the sup vector norm are given by $\|A\|_{\infty}:=\max_{1\leq i\leq N}\sum_{j=1}^N|A_{ij}|$. We also write $\|A\|_{\sup}:=\max_{i,j}|A_{ij}|$.

Throughout the paper, we use~$c$ and~$C$ to denote strictly positive constants that are independent of $N$. Their values may change from line to line. We use standard Big-O and little-o notations. For $X,Y \in \R$, we write $X \ll Y$ if there exists a small $\tau>0$ such that $|X| \leq N^{-\tau} |Y|$ for large $N$. Moreover, we write $X \sim Y$ if there exist constants $c, C>0$ such that $c |Y| \leq |X| \leq C |Y|$ for large $N$. Finally, we denote the upper half-plane by $\C^+\deq\{z\in\C\,:\,\im z>0\}$.

\section{Main results}\label{sec:main_results}

Let $H \equiv H_N$ be an $ N \times N$ real or complex generalized Wigner matrix satisfying the following assumption.
\begin{assumption}\label{assumption_1}
For real ($\beta=1$) generalized Wigner matrix, we assume that 
	\begin{enumerate}
		\item $\{ H_{ij}|  i \leq j \} $ are independent real-valued centered random variables with $H_{ij}=H_{ji}$.
		\item Let $S \equiv S_N$ denote the matrix of variances, i.e., $S:=(s_{ij})_{i,j=1}^N$ with $s_{ij}=\E |H_{ij}|^2$. There exist constants $0 <C_{\inf} \leq C_{\sup} < \infty$ such that
		\begin{equation}\label{flat}
		\sum_{i=1}^N s_{ij}\equiv 1;\qquad C_{\inf} \leq \inf_{N,i,j} N s_{ij} \leq \sup_{N,i,j} N s_{ij} \leq C_{\sup}.
		\end{equation}
		\item  All moments of the entries of $\sqrt{N}H_N$ are uniformly bounded, i.e., for any $k \in \N$, there exists $C_k$ independent of $N$ such that for all $1 \leq i,j \leq N$,
		\begin{equation}\label{moment_condition}
		\E |\sqrt{N} H_{ij}|^k \leq C_k.
		\end{equation}	
	\end{enumerate}
For complex ($\beta=2$) generalized Wigner matrix, we assume that
	\begin{enumerate}
		\item[(a)] $\{\Re H_{ij},\Im H_{ij}| i \leq j \}$ are independent real-valued centered random variables with $H_{ij}=\overline{H_{ji}}$. 
		\item[(b)] The same moment conditions 2 and 3 hold and $\E [H_{ij}^2]=0$ for $i \neq j$. 
	\end{enumerate}
\end{assumption}

For a probability measure $\nu$ on $\R$, denote by $m_\nu$ its Stieltjes transform, i.e.,
\begin{align}
 m_\nu(z)\deq\int_\R\frac{\dd\nu(x)}{x-z}\,,\qquad z\in\C^+\,.
\end{align}
Note that $m_{\nu}\,:\C^+\rightarrow\C^+$ is analytic and can be analytically continued to the real line outside the support of $\nu$. Moreover, $m_{\nu}$ satisfies $\lim_{\eta\nearrow\infty}\ii\eta {m_{\nu}}(\ii \eta)=-1$. The Stieltjes transform of the semicircle law $\mu_{sc}:=\rho_{sc}(x) \dd x=\frac{1}{2 \pi} \sqrt{4-x^2} \mathds{1}_{[-2,2]} \dd x$, denoted by $m_{sc}$, is defined as the unique analytic solution $\C^+ \rightarrow \C^+$ satisfying 
\begin{equation}\label{msc}
m_{sc}^2(z)+zm_{sc}(z)+1=0.
\end{equation}

Fix the energy $E_0 \in [-2, 2]$ and set $ N^{-1} \ll \eta_0 \leq 1$. Consider a scaled test function 
\begin{equation}\label{fn}
f \equiv f_{N}(x):=g\Big( \frac{x-E_0}{\eta_0}\Big),\qquad g \in C_c^2(\R).
\end{equation}
Define the distance between the support of $f$ and the nearest edge of the semicircle~law,
\begin{equation}\label{kappa_0}
\kappa_0:=\mbox{dist}(\mbox{supp}(f),\{-2, 2\}).
\end{equation}
Then we have the following CLT for the linear eigenvalue statistics of $H_N$.
\begin{theorem}\label{thm:weak_convergence}
	Let $H_N$ be a generalized Wigner matrix satisfying Assumption \ref{assumption_1} and assume that $ \eta_0 \sqrt{\kappa_0 +\eta_0} \geq N^{-1+c_0}$ for some constant $c_0>0$. Then there exists a small constant $0<\tau<\frac{c_0}{16}$ such that the following statements hold. For $f$ as in (\ref{fn}), define
	\begin{align}\label{vf}
	V(f):=&-\frac{1}{4 \pi^2} \int_{\Gamma_1} \int_{\Gamma_2} \tf(z) \tf(z') \Big\{ \frac{2}{\beta} \Tr \Big(  \frac{m'_{sc}(z) m'_{sc}(z') S}{(1-m_{sc}(z) m_{sc}(z') S)^2}\Big)\nonumber\\
	& +2 k_4 m_{sc}(z)m'_{sc}(z) m_{sc}(z')m'_{sc}(z')+\Tr S \Big(1-\frac{2}{\beta}\Big) m'_{sc}(z)m'_{sc}(z') \Big\} \dd z \dd z',
	\end{align}
where
\begin{itemize}
	\item $k_4$ is the summation of the forth cumulants (see (\ref{cumulant_formula}) and (\ref{K_4})) of both real and imaginary parts of all entries $\{H_{ij}\}$;
	\item $\tf$ is an almost-analytic extension of $f$, i.e.
	\begin{equation}\label{tilde_f}
	\tilde{f}(x+\ii y):=(f(x)+\ii y f'(x)) \chi(y),
	\end{equation}
	where  $\chi: \mathbb R\to[0,1]$ is a smooth cutoff function with support in $[-2,2]$ and with $\chi(y)=1$, for $|y| \leq 1$;
	\item the contours $\Gamma_{k}~(k=1,2)$ are given by $\{ z \in \C\,:\, |\Im z| = \frac{1}{k }N^{-\tau} \eta_0 \}$ with counterclockwise orientation.
\end{itemize}
 If there exist constants $c, C>0$ such that $c<V(f) < C$, then
	$$\frac{\Tr f(H_N)-\E \Tr f(H_N)}{\sqrt{V(f)}} \xrightarrow{d} \mathcal{N}(0,1).$$
Moreover, the so-called bias is given by
	\begin{align}\label{bf}
	\E \Tr f(H_N)-&N \int_{\R} f(x) \rho_{sc}(x) \dd x= \frac{1}{2 \pi \ii } \int_{\Gamma_1}  \tf(z) \Big\{ \Big( \frac{2}{\beta} -1\Big) \Tr \Big( \frac{m'_{sc}(z) m^3_{sc}(z)  S^2}{1-m^2_{sc}(z)S} \Big)\nonumber\\
	&+k_4 m'_{sc}(z) m^3_{sc}(z)  \Big\}\dd z+O_{\prec}\Big( \frac{N^{2 \tau}}{(N \eta_0 \sqrt{\kappa_0+\eta_0})^{1/4}}\Big)+O_{\prec}(N^{-\tau}).
	\end{align}
	
\end{theorem}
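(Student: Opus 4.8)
\textbf{Proof strategy for Theorem~\ref{thm:weak_convergence}.}
The plan is to reduce both the CLT and the bias formula to the analysis of a single object, the characteristic function $\phi_N(\lambda):=\E\big[\e{\ii\lambda(\Tr f(H_N)-\E\Tr f(H_N))}\big]$, following the approach of \cite{lytova+pastur,character}. First I would use the Helffer--Sj\"ostrand formula to write $\Tr f(H_N)=\frac{1}{\pi}\int_{\C}\partial_{\bar z}\tf(z)\,\Tr G(z)\,\dd^2 z$ in terms of the resolvent $G(z)=(H_N-z)^{-1}$, so that $\phi_N'(\lambda)$ becomes an integral involving $\E\big[\big(\Tr G(z)-\E\Tr G(z)\big)\e{\cdots}\big]$. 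The next step is to split the $z$-integral into an ultra-mesoscopic part $|\Im z|\le N^{-\tau}\eta_0$, which is negligible because very local scales do not affect mesoscopic linear statistics (this is where the contours $\Gamma_1,\Gamma_2$ and the assumption $\eta_0\sqrt{\kappa_0+\eta_0}\ge N^{-1+c_0}$ enter: on the restricted domain $N\Im z\sqrt{\kappa_0+|\Im z|}\gg 1$, so the local laws of \cite{locallaw,Erdos+Yau+Yin} are effective), and a main part on which we run the cumulant expansion of Lemma~\ref{cumulant}. I expect this to funnel directly into Proposition~\ref{prop}, whose statement already packages the result of the cumulant expansion: it should give a self-consistent equation for $\phi_N'(\lambda)$ of the form $\phi_N'(\lambda)=-\lambda V(f)\phi_N(\lambda)+(\text{error})$, where the $V(f)$ in \eqref{vf} emerges from the leading second-order cumulant term (giving the $\frac{2}{\beta}\Tr(\cdots)$ piece, which for standard Wigner collapses to the classical answer), the fourth-cumulant term (giving the $k_4$ piece), and the third-cumulant/real-symmetry correction (giving the $(1-\frac{2}{\beta})\Tr S$ piece). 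Integrating this ODE with $\phi_N(0)=1$ yields $\phi_N(\lambda)\to\e{-\lambda^2 V(f)/2}$, which is the characteristic function of $\mathcal{N}(0,V(f))$, and L\'evy continuity gives the stated convergence in distribution once $c<V(f)<C$.

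For the bias formula \eqref{bf}, the plan is to apply the same Helffer--Sj\"ostrand representation without the exponential weight, so that $\E\Tr f(H_N)-N\int f\rho_{sc}=\frac{1}{\pi}\int_\C\partial_{\bar z}\tf(z)\big(\E\Tr G(z)-Nm_{sc}(z)\big)\dd^2z$, and then to produce a precise expansion of $\E\Tr G(z)-Nm_{sc}(z)$ to order $O(1)$. The input here is again a cumulant expansion of $\E[G_{ij}]$ (or of $\E\Tr G$) combined with the local law $\E m_N(z)=m_{sc}(z)+O_\prec((N\eta)^{-1})$ and its refinements; the $O(1)$ term in $\E\Tr G(z)-Nm_{sc}(z)$ has a closed form involving $\frac{m_{sc}'m_{sc}^3 S^2}{1-m_{sc}^2 S}$ (from the variance-profile structure, weighted by $\frac{2}{\beta}-1$ for the symmetry class) plus a $k_4 m_{sc}'m_{sc}^3$ term. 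Substituting this into the contour integral and contracting $\partial_{\bar z}\tf$ against an analytic integrand (so that only the $\Gamma_1$ boundary term survives, by Stokes/Cauchy) gives the explicit double-integral-free expression in \eqref{bf}; the two error terms $O_\prec(N^{2\tau}(N\eta_0\sqrt{\kappa_0+\eta_0})^{-1/4})$ and $O_\prec(N^{-\tau})$ come respectively from the truncation of the ultra-local scales and from the local-law error propagated through the $z$-integral.

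The main obstacle, as the introduction flags, is controlling the error terms in the cumulant expansion, and the crux of that is a good bound on the two-point function $T_{ab}(z,z')=\sum_{j\neq b}s_{aj}G_{jb}(z)G_{jb}(z')$ with \emph{two distinct} spectral parameters. Unlike the standard Wigner case, where $\sum_j G_{jb}(z)G_{jb}(z')$ is essentially $(G(z)G(z'))_{b\cdot}$ and resolvent identities plus cyclicity of the trace reduce everything to one-parameter quantities, here the variance profile $s_{aj}$ obstructs that algebraic collapse, so one genuinely needs a free-standing \emph{weak local law} for $T_{ab}(z,z')$ --- this is Lemma~\ref{tracet}, proved via the fluctuation averaging mechanism of \cite{Erdos+Knowles+Yau} together with a recursive moment estimate built on cumulant expansions. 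Granting Lemma~\ref{tracet}, Lemma~\ref{dominant} (properties of $\prec$), the isotropic local laws, and the fluctuation averaging estimates, the error terms in Proposition~\ref{prop} are of size $O_\prec(N^{C\tau}(N\eta_0\sqrt{\kappa_0+\eta_0})^{-c})$, which is $o(1)$ by the choice $\tau<c_0/16$ and the hypothesis on $\eta_0$; this is the step where one must be most careful about tracking powers of $N^\tau$ against the gain from $N\eta_0\sqrt{\kappa_0+\eta_0}$ near the edge, since $\kappa_0$ may be as small as $N^{-2/3}$-ish and the edge behavior of $m_{sc}$ degrades the naive bounds by factors of $(\kappa_0+\eta_0)^{-1/2}$.
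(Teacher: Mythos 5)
Your strategy matches the paper's proof essentially step for step: Helffer--Sj\"ostrand representation plus truncation of the ultra-local scales onto the contours $\Gamma_1,\Gamma_2$, cumulant expansion leading to the self-consistent relation $\phi'(\lambda)=-\lambda V(f)\phi(\lambda)+o(1)$ of Proposition~\ref{prop}, the two-parameter local law for $T_{ab}(z,z')$ (Lemma~\ref{tracet}, proved by fluctuation averaging and recursive moment estimates) as the key technical input, and the same unweighted expansion of $\E\Tr G(z)-Nm_{sc}(z)$ for the bias. The only small slip is attributing the $(1-\frac{2}{\beta})\Tr S$ term partly to the third cumulant: in the paper the third-cumulant contribution $I_2$ is negligible, and that term arises from the symmetry-class part of the second-cumulant expansion, i.e., the extra $G_{ib}G_{aj}$ piece in \eqref{dH}.
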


\begin{remark}
We remark that Theorem \ref{thm:weak_convergence} applies to the global scales as well as optimal mesoscopic scales up to the spectral edges. The formulas for the variance (\ref{vf}) and the bias (\ref{bf}) coincide with the corresponding results for standard Wigner matrices \cite{character,Li+Schnelli+Xu} where $s_{ij}=N^{-1}$ for all $1 \leq i,j\leq N$.
\end{remark}

Finally, we obtain the following mesoscopic CLTs for the linear eigenvalue statistics in the bulk and at the edges respectively.
\begin{theorem}[Universal mesoscopic CLTs]\label{meso}
	Let $H_N$ be a generalized Wigner matrix satisfying Assumption \ref{assumption_1}. Fix any $E_0 \in (-2, 2)$ and $c_1 \in (0,1)$, and set $\eta_0=N^{-c_1}$. For any function $g \in C^2_c(\R)$, the mesoscopic linear statistics in the bulk
	\begin{equation*}
	\sum_{i=1}^N g \Big( \frac{\lambda_i-E_0}{\eta_0} \Big)-N \int_{\R} g \Big(\frac{x-E_0}{\eta_0} \Big) \rho_{sc}(x) \dd x \xrightarrow{d} \mathcal{N} \Big(0,\frac{1}{\beta \pi} \int_{\R} |\xi| |\hat{g}(\xi)|^2 \dd \xi \Big),
	\end{equation*}
	where $\hat{g}(\xi):=(2 \pi)^{-1/2} \int_{\R} g(x) e^{-\ii \xi x} \dd x$.	
	
	In addition, set $E_0=\pm 2$ and $\eta_0=N^{-c_2}$ with any fixed $c_2 \in (0, \frac{2}{3})$. Then the mesoscopic linear statistics at the edges
	\begin{equation*}
	\sum_{i=1}^N g \Big( \frac{\lambda_i-E_0}{\eta_0} \Big)-N \int_{\R} g \Big(\frac{x-E_0}{\eta_0} \Big) \rho_{sc}(x) \dd x \xrightarrow{d} \mathcal{N} \Big(\Big( \frac{2}{\beta} -1\Big)\frac{g(0)}{4},\frac{1}{2 \beta \pi} \int_{\R} |\xi| |\hat{h}(\xi)|^2 \dd \xi \Big),
	\end{equation*}
       where $h(x):=g(\mp x^2)$, and $\hat{h}(\xi):=(2 \pi)^{-1/2} \int_{\R} h(x) e^{-\ii \xi x} \dd x$.
	\end{theorem}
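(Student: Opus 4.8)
The plan is to derive Theorem~\ref{meso} from Theorem~\ref{thm:weak_convergence} by explicitly evaluating the variance $V(f)$ and the bias formula on the scaled test function $f_N(x)=g((x-E_0)/\eta_0)$, separately for $E_0$ in the bulk and at the edges. The first step is to record that, because $g$ has compact support, the relevant contours $\Gamma_k=\{|\Im z|=k^{-1}N^{-\tau}\eta_0\}$ sit at distance of order $N^{-\tau}\eta_0$ from the real axis near $E_0$, so on these contours $m_{sc}(z)$ is close to $m_{sc}(E_0)$, with $|m_{sc}(E_0)|<1$ in the bulk and $|m_{sc}(\pm 2)|=1$ at the edge; this dichotomy is exactly what produces the two different answers. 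Then I would change variables $z=E_0+\eta_0 w$, $z'=E_0+\eta_0 w'$ so that $\tilde f(z)\,\dd z = \eta_0\,\tilde g(w)\,\dd w$ (up to the harmless cutoff $\chi$, which equals $1$ on the rescaled contours for large $N$), turning the double contour integral into one over fixed contours $\{|\Im w|=k^{-1}N^{-\tau}\}$ in the $w$-plane.

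Next I would expand each of the three terms in the brace of \eqref{vf} in this scaling. For the first (``main'') term, $\frac{2}{\beta}\Tr\big(m'_{sc}(z)m'_{sc}(z')S(1-m_{sc}(z)m_{sc}(z')S)^{-2}\big)$: in the bulk, $1-m_{sc}(z)m_{sc}(z')S$ is bounded away from $0$ (since $|m_{sc}|$ is bounded below $1$ and $\|S\|\le 1$), the resolvent expansion converges, and using $\sum_i s_{ij}=1$ the leading contribution reduces to $\frac{2}{\beta}\sum_{k\ge 1}k(m_{sc}(z)m_{sc}(z'))^{k-1}m'_{sc}(z)m'_{sc}(z')\Tr S^k$; here $\Tr S^k = N + O(1)$ is negligible relative to the singular behavior, and one recovers exactly the standard Wigner answer, namely after the $w,w'$ integration the classical $\frac{1}{\beta\pi}\int|\xi||\hat g(\xi)|^2\,\dd\xi$ via the known GUE/GOE computation (as in \cite{character}). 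The point is that the $S$-dependence washes out because the divergence as $N\to\infty$ comes from the $k=1$ term and low-order terms where $\Tr S^k/N\to 1$. At the edge $E_0=\pm 2$ one instead uses $m_{sc}(z)=\mp 1 + O(\sqrt{|z\mp 2|})$, rescales with $h(x)=g(\mp x^2)$ to linearize the square-root, and the same bookkeeping yields the edge variance $\frac{1}{2\beta\pi}\int|\xi||\hat h(\xi)|^2\,\dd\xi$. The remaining two terms in \eqref{vf}: the $k_4$ term has $m_{sc}(z)m'_{sc}(z)m_{sc}(z')m'_{sc}(z')$, which after rescaling carries a factor $\eta_0^4$ from the measure times bounded integrand, hence is $o(1)$ (similarly the $(1-2/\beta)\Tr S\, m'_{sc}(z)m'_{sc}(z')$ term is $O(N)\cdot O(1)\cdot\eta_0^2$-type but the $\tilde f$ factors kill it — more precisely its contour integral vanishes in the limit because $m'_{sc}(z)m'_{sc}(z')$ is a product of functions each analytic inside its contour except for the square-root cut, giving a lower-order contribution). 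I would track these error terms carefully to confirm they go to zero on the whole range $c_1\in(0,1)$, resp. $c_2\in(0,2/3)$.

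For the bias \eqref{bf}, the same rescaling $z=E_0+\eta_0 w$ applies. In the bulk, $1-m_{sc}^2(z)S$ is invertible with bounded inverse, so $\Tr\big(m'_{sc}(z)m_{sc}^3(z)S^2(1-m_{sc}^2(z)S)^{-1}\big)=\sum_{k\ge 0}m_{sc}^{2k+3}(z)m'_{sc}(z)\Tr S^{k+2}$; each term's contour integral $\oint_{\Gamma_1}\tilde f(z)m_{sc}(z)^{\text{power}}m'_{sc}(z)\,\dd z$ is $O(\eta_0)$ (it is a bounded analytic expression integrated over a contour of length $O(\eta_0)$, once the branch cut is accounted for), so the whole bias term is $o(1)$ in the bulk — matching the stated limiting mean $0$. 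At the edge, $m_{sc}(z)^3 m'_{sc}(z)$ has a square-root singularity at $z=\pm 2$ inside the support of $f$, and the contour integral picks up a residue-like contribution of order $1$: deforming $\Gamma_1$ to wrap the cut and using $m_{sc}(\pm2\pm\eta_0 w)\approx \mp1$ together with $m'_{sc}\sim (\mp)\,(\text{const})/\sqrt{\eta_0 w}$, one extracts $(\frac{2}{\beta}-1)\frac{g(0)}{4}$ after the $w$-integral; the $k_4 m'_{sc}(z)m_{sc}^3(z)$ term is again $O(\eta_0)\cdot$(integrable singularity)$\,=o(1)$. Finally, the error terms $O_\prec(N^{2\tau}(N\eta_0\sqrt{\kappa_0+\eta_0})^{-1/4})+O_\prec(N^{-\tau})$ in \eqref{bf}: since $\tau<c_0/16$ can be taken arbitrarily small and $\kappa_0=0$ at the edge while $\eta_0=N^{-c_2}$ with $c_2<2/3$, one has $N\eta_0\sqrt{\eta_0}=N^{1-3c_2/2}\to\infty$, so both errors vanish; in the bulk $\kappa_0\sim1$ gives $N\eta_0\to\infty$, likewise fine. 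Also I must confirm that $V(f)$ stays in a compact interval $(c,C)$ as required to apply Theorem~\ref{thm:weak_convergence} — this follows from the explicit positive limits $\frac{1}{\beta\pi}\int|\xi||\hat g(\xi)|^2\,\dd\xi$ and $\frac{1}{2\beta\pi}\int|\xi||\hat h(\xi)|^2\,\dd\xi$ being strictly positive for nonconstant $g$, plus the fact that a degenerate $g$ (where the limiting variance is zero) is excluded or handled by a density/Slutsky argument.

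The main obstacle I anticipate is the edge analysis: controlling the contour integrals when $m_{sc}$ and $m'_{sc}$ develop square-root singularities right in the middle of $\mathrm{supp}(f)$, making the naive ``integrand bounded $\times$ short contour'' estimate insufficient and forcing a careful contour deformation around the branch cut with the change of variables $x\mapsto \mp x^2$ to linearize. One has to check uniformity in $N$ of all these asymptotics on contours that themselves shrink like $N^{-\tau}\eta_0$, and ensure that the cutoff $\chi$ in $\tilde f$ genuinely plays no role (it equals $1$ wherever the integrand is non-negligible). A secondary technical point is justifying that the series expansions of $(1-m_{sc}m_{sc}'S)^{-2}$ and $(1-m_{sc}^2 S)^{-1}$ converge on the relevant contours and can be integrated term by term, together with the estimate $\Tr S^k = N(1+o(1))$ for fixed $k$ (immediate from $S$ doubly stochastic with spectrum in $[-1,1]$ and largest eigenvalue $1$ simple up to the structure of $S$), so that the $S$-profile indeed disappears from the limit and universality holds.
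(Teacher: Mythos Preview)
Your overall strategy --- derive Theorem~\ref{meso} from Theorem~\ref{thm:weak_convergence} by computing the limits of $V(f)$ and of the bias on the scaled test function --- is precisely what the paper does (the paper in fact defers the detailed computation to Section~6 of \cite{Li+Schnelli+Xu}). However, the mechanism you propose for why the $S$-dependence disappears is incorrect in two places and would not go through as written. First, your claim that ``$|m_{sc}|$ is bounded below $1$'' in the bulk is false: for $E\in[-2,2]$ one has $m_{sc}(E+\ii 0)=\tfrac12(-E+\ii\sqrt{4-E^2})$, so $|m_{sc}(E+\ii 0)|\equiv 1$. Hence when $z,z'$ approach $E_0$ from opposite half-planes, $m_{sc}(z)m_{sc}(z')\to|m_{sc}(E_0)|^2=1$ and the geometric series for $(1-m_1m_2S)^{-2}$ diverges. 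Second, your claim ``$\Tr S^k=N+O(1)$'' is wrong: from $C_{\inf}/N\le s_{ij}\le C_{\sup}/N$ one gets $\Tr S^2=\sum_{i,j}s_{ij}^2=O(1)$, hence $\sum_{i\ge2}\lambda_i(S)^2=O(1)$ and therefore $\Tr S^k=1+O(1)$ for every $k\ge1$ (in particular $\Tr S=\sum_i s_{ii}\sim 1$, not $\sim N$).

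The correct route is via the spectral projection $\Pi=\mathbf e\mathbf e^T$ of Lemma~\ref{Pi}: since $S\Pi=\Pi$,
\[
\Tr\frac{m'_1m'_2\,S}{(1-m_1m_2 S)^2}
=\frac{m'_1m'_2}{(1-m_1m_2)^2}
+m'_1m'_2\sum_{i\ge2}\frac{\lambda_i}{(1-m_1m_2\lambda_i)^2}\,.
\]
The first term is exactly the standard Wigner variance kernel and produces the universal answer as in \cite{character,Li+Schnelli+Xu}. For the remainder, $|\lambda_i|\le1-\delta_\pm$ gives $|1-m_1m_2\lambda_i|\ge\delta_\pm$, and then $\big|\tfrac{\lambda_i}{(1-m_1m_2\lambda_i)^2}-\lambda_i\big|\le C\lambda_i^2$, so the whole sum is $O(|\Tr S|+\Tr S^2)=O(1)$ uniformly on the contours. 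After the double contour integral (effective measure $O(\eta_0^2)$ from $\mathrm{supp}\,f$) this remainder, together with the $k_4$ and $\Tr S\,(1-\tfrac{2}{\beta})$ terms (both $O(1)$ integrands), contributes $o(1)$ in the bulk; at the edge the extra factor $|m'_1m'_2|\sim\eta_0^{-1}$ is still beaten by the $\eta_0^2$ measure. The bias is handled by the same decomposition. So your plan is sound, but the justification must be rewritten around Lemma~\ref{Pi} and the Hilbert--Schmidt bound $\Tr S^2=O(1)$ rather than a power-series expansion and the (incorrect) heuristic $\Tr S^k/N\to1$.
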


\begin{remark}
The means and variances of the limiting laws in Theorem \ref{meso} agree with the corresponding results for the Gaussian ensembles. See \cite{meso1,FKS, Min+Chen_another_paper} for the bulk and \cite{basor+widom, Min+Chen} for the edges. Such edge formulas were also obtained in other ensembles, e.g., Dyson Brownian motion \cite{Adhikari+Huang}, deformed Wigner matrices and sample covariance matrices~\cite{Li+Schnelli+Xu}. 
\end{remark}

\section{Preliminaries}\label{sec:preliminary}
In this section, we introduce some preliminary results that will be used in the proof. 

\subsection{Properties of the Stieltjes transform of the semicircle law}
In this subsection, we recall some properties of $m_{sc}$. Let $\kappa=\kappa(E)$ be the distance from $E$ to the closest spectral edge of the semicircle law, i.e.,
\begin{equation}\label{kappa}
\kappa:=\min \{ |E+2|, |E-2|  \}.
\end{equation}
Define the spectral domain
\begin{equation}\label{d}
D:=\{z=E+\i \eta:|E| \leq 5, 0< \eta \leq 10 \}.
\end{equation}

\begin{lemma}[Lemma 4.2 in \cite{Erdos+Yau+Yin3}, Lemma 6.2 in \cite{book}]\label{previousgene}
We have the following estimates.
	\begin{enumerate}
		\item For any $z \in D$, there exists a constant $c>0$ such that
		\begin{equation}\label{1} 
		c \leq |m_{sc}(z)| \leq 1-c \eta.
		\end{equation}
		\item For all $z \in D$, we have	
		\begin{equation}\label{22}
		|{\Im} m_{sc}(z)|  \sim \begin{cases}
		\sqrt{\kappa+\eta}, & \mbox{if } |E| \leq 2, \\
		\frac{\eta}{\sqrt{\kappa+\eta}}, & \mbox{otherwise}.
		\end{cases}
		\end{equation}		
		\item For all $z \in D$, there exist some constants $c,C>0$ such that
		\begin{equation}\label{3}
		c \sqrt{\kappa +\eta} \leq | 1-m_{sc}^2(z) | \leq C \sqrt{\kappa +\eta}.
		\end{equation}
	\item For all $z \in D$, we have
	\begin{equation}\label{4}
	|m_{sc}(z)| \sim 1; \quad | m'_{sc}(z) | \sim \frac{1}{\sqrt{\kappa+\eta}}; \quad | m''_{sc}(z) | =O\Big(  \frac{1}{\sqrt{(\kappa+\eta)^3}}\Big).
	\end{equation}
	\end{enumerate}
\end{lemma}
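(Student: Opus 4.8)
The plan is to reduce every estimate to the defining quadratic \eqref{msc}, $m_{sc}^2(z)+zm_{sc}(z)+1=0$, together with the branch condition $m_{sc}\colon\C^+\to\C^+$ and the normalization at infinity; throughout I write $m\equiv m_{sc}(z)$ and $z=E+\ii\eta$. The branch condition forces $2m+z=\sqrt{z^2-4}$, and then two algebraic identities carry most of the weight:
\begin{equation*}
1-m^2=-m(2m+z)=-m\sqrt{z^2-4},\qquad m_{sc}'(z)=\frac{m^2}{1-m^2},
\end{equation*}
the first from $m^2+zm=-1$, the second by differentiating \eqref{msc} to get $m_{sc}'(2m+z)=-m$ and substituting the first. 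Since $|z^2-4|=|z-2|\,|z+2|$, and for $z\in D$ the factor attached to the nearer edge is $\sim\kappa+\eta$ while the other is $\sim1$ (both are $\sim1\sim\kappa+\eta$ when $E$ is near the origin), one has $|\sqrt{z^2-4}|\sim\sqrt{\kappa+\eta}$; granting $|m|\sim1$, this yields \eqref{3} and the middle relation in \eqref{4} immediately.

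To obtain $|m|\sim1$, hence \eqref{1} and the first relation of \eqref{4}, I would rewrite \eqref{msc} as $m=-(z+m)^{-1}$ and take imaginary parts, getting $\Im m=|m|^2(\eta+\Im m)$, i.e.
\begin{equation*}
\Im m\,(1-|m|^2)=\eta\,|m|^2\ \ge\ 0.
\end{equation*}
Since $\Im m>0$ on $\C^+$, this forces $|m|\le1$. The lower bound $|m|\ge c$ follows from $|m|\,|m+z|=1$ (again from $m^2+zm=-1$) together with $|m+z|\le|m|+|z|\le C$ on the bounded set $D$, and the refinement $|m|\le1-c\eta$ comes from substituting $|m|\ge c$ and $\Im m\le1$ into the displayed identity when $\eta$ is not too large, while for $\eta$ bounded away from $0$ it is a compactness statement using that $|m_{sc}|=1$ on $\R$ only inside $[-2,2]$.

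For the two-sided bound on $\Im m$ in \eqref{22} I would combine the above with the elementary identity $|1-m^2|^2=(1-|m|^2)^2+4(\Im m)^2$, checked by expanding in $\Re m,\Im m$. Putting $u=1-|m|^2\ge0$ and $v=\Im m>0$, the two relations $uv\sim\eta$ and $u^2+4v^2\sim\kappa+\eta$ (the latter from \eqref{3}) determine $v^2$ as a root of a quadratic whose solutions are $\sim\kappa+\eta$ and $\sim\eta^2/(\kappa+\eta)$; a continuity argument in $\eta$, using $v\to\pi\rho_{sc}(E)$ as $\eta\downarrow0$ --- which is $\sim\sqrt\kappa$ for $|E|\le2$ and $0$ for $|E|>2$ --- selects the large root in the first case and the small one in the second. (One can instead bypass the root selection by estimating $\Im m_{sc}(E+\ii\eta)=\int_\R\eta\rho_{sc}(x)\big((x-E)^2+\eta^2\big)^{-1}\dd x$ directly, splitting according to $|x-E|\lessgtr\sqrt{\kappa+\eta}$.)

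Finally, differentiating $m_{sc}'(1-m^2)=m^2$ once more gives $m_{sc}''=2m\,m_{sc}'(1+m_{sc}')/(1-m^2)$, into which I would insert $|m|\sim1$, $|m_{sc}'|\sim(\kappa+\eta)^{-1/2}$ and $|1-m^2|\sim(\kappa+\eta)^{1/2}$, using $\kappa+\eta\lesssim1$ on $D$ to absorb the $1$ into $|m_{sc}'|$, and arrive at $|m_{sc}''|=O((\kappa+\eta)^{-3/2})$. None of these steps is deep; the only place needing a little care is the root selection (equivalently, the direct Poisson-integral estimate) behind \eqref{22}, since one must exclude the spurious small root when $|E|\le2$.
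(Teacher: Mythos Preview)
The paper does not prove this lemma; it simply cites Lemma~4.2 of \cite{Erdos+Yau+Yin3} and Lemma~6.2 of \cite{book}. Your argument is the standard one and is essentially correct: the algebraic identities $1-m^2=-m(2m+z)$, $m'=m^2/(1-m^2)$, $\Im m\,(1-|m|^2)=\eta|m|^2$, and $|1-m^2|^2=(1-|m|^2)^2+4(\Im m)^2$ are all verified by direct computation from \eqref{msc}, and together with $|z^2-4|\sim\kappa+\eta$ on $D$ they yield \eqref{1}, \eqref{3}, and \eqref{4} exactly as you describe.

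The only point that would deserve a closer look in a fully written-out proof is the root selection for \eqref{22}. Your continuity argument is morally right but, as stated, would need you to rule out branch switching when the two roots of $4t^2-At+B^2=0$ coalesce (i.e.\ when $A^2=16B^2$); since $v^2$ is real-analytic in $\eta$ and equals one of the two (continuous) roots, this can be fixed, but it is cleaner to take the route you mention parenthetically and estimate the Poisson integral $\Im m_{sc}(E+\ii\eta)=\int_\R \eta\rho_{sc}(x)\big((x-E)^2+\eta^2\big)^{-1}\dd x$ directly, which gives both regimes of \eqref{22} without any selection step. Either way there is no real gap.
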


\subsection{Properties of the variance matrix $S$}
In this subsection, we state some properties of the matrix of variances $S$, which is crucial in studying the local laws of the generalized Wigner matrices. Recall that $S=(s_{ij})_{i,j=1}^N$ is the matrix of variances satisfying (\ref{flat}), and $S$ is deterministic, symmetric and doubly stochastic with strictly positive entries. Hence $1$ is the largest eigenvalue, with eigenvector $\mathbf{e}:=N^{-\frac{1}{2}} (1,1, \cdots ,1)^T$. By the Perron-Frobenius Theorem, the largest eigenvalue 1 is simple and all other eigenvalues are strictly less than 1 in absolute value. Define $\delta_{\pm}$ to be the spectral gaps satisfying
$$\mbox{Spec}(S) \subset [-1+\delta_-, 1-\delta_+] \cap \{1\}.$$
It is not hard to show that $$\delta_{\pm} \geq C_{\inf}>0,$$
provided $S$ satisfies (\ref{flat}). Combining with (\ref{1}), $1-m_{sc}(z) m_{sc}(z') S$ is invertible. Thus, we have the following estimates.

\begin{lemma}\label{Pi}
	Define $\Pi:=\mathbf{e}\mathbf{e}^T$ with $\mathbf{e}=N^{-\frac{1}{2}} (1,1, \cdots ,1)^T$. For any $z,z' \in D$ or $z,\overline{z'} \in D$, there exists $C>0$ such that
	$$\Big\| \frac{1}{1-m_{sc}(z) m_{sc}(z') S} \Big\|_{\infty} \leq \frac{C}{|1-m_{sc}(z) m_{sc}(z')|}, \quad \Big\| \frac{1-\Pi}{1-m_{sc}(z) m_{sc}(z') S}\Big\|_{\infty} \leq C,$$
	where the constant $C$ depends on $C_{\inf}$ and $C_{\sup}$ in (\ref{flat}).
\end{lemma}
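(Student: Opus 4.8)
The plan is to exploit the spectral decomposition of the symmetric doubly-stochastic matrix $S$ together with the spectral gap estimate $\delta_{\pm} \geq C_{\inf} > 0$. Write $S = \Pi + S(1-\Pi)$, where $\Pi = \mathbf{e}\mathbf{e}^T$ is the orthogonal projection onto the top eigenvector and $S(1-\Pi) = (1-\Pi)S$ has operator norm (in $\ell^2$) at most $1-C_{\inf}$. Set $m := m_{sc}(z)$, $m' := m_{sc}(z')$ for brevity. Splitting the resolvent accordingly,
\begin{align}\label{eq:split}
\frac{1}{1-mm'S} = \frac{\Pi}{1-mm'} + \frac{1-\Pi}{1-mm'S}.
\end{align}
The first term contributes $\frac{1}{1-mm'}\Pi$, whose $\|\cdot\|_{\infty}$ norm is exactly $\frac{1}{|1-mm'|}$ since every entry of $\Pi$ equals $N^{-1}$ and each row sums to $1$. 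So everything reduces to showing the second term, $(1-\Pi)(1-mm'S)^{-1}$, is bounded in $\|\cdot\|_{\infty}$ by a constant depending only on $C_{\inf}, C_{\sup}$; this also immediately gives the second claimed inequality.

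The main obstacle is that $\|\cdot\|_{\infty}$ is not the operator norm naturally associated with the spectral decomposition of $S$ (that would be the $\ell^2 \to \ell^2$ norm), so one cannot simply bound $(1-\Pi)(1-mm'S)^{-1}$ by a Neumann-series argument in the $\infty$-norm: the geometric series $\sum_k (mm')^k S^k(1-\Pi)$ need not converge in $\|\cdot\|_\infty$ because $\|S^k(1-\Pi)\|_{\infty}$ need not decay. To get around this I would use the following trick. First note $\|S\|_{\infty} = 1$ (rows sum to one, entries nonnegative), and that by (\ref{1}) we have $|mm'| \leq 1 - c\eta$ with $c>0$, but more to the point $|mm'|$ can be close to $1$, so the naive bound is genuinely lost only when $z,z'$ are near the edge; this is exactly the regime where the first term in \eqref{eq:split} blows up, which is consistent with the statement. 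The clean approach: on the range of $1-\Pi$, decompose the resolvent as a short partial sum plus a remainder,
\begin{align}\label{eq:partialsum}
\frac{1-\Pi}{1-mm'S} = \sum_{k=0}^{K-1} (mm')^k S^k (1-\Pi) + (mm')^K S^K \frac{1-\Pi}{1-mm'S},
\end{align}
and then estimate the remainder's $\ell^2$-norm: on $\mathrm{Ran}(1-\Pi)$ the matrix $S$ has $\ell^2$-norm at most $1 - C_{\inf}$, so $\|S^K(1-\Pi)\|_{2} \leq (1-C_{\inf})^K$, and $\|(1-mm'S)^{-1}(1-\Pi)\|_2 \leq (C_{\inf} - (1-|mm'|))^{-1}_{+}$-type bounds — but actually more simply $\|(1-mm'S)^{-1}(1-\Pi)\|_2 \leq 1/(1 - |mm'|(1-C_{\inf})) \leq 1/C_{\inf}$ using $|mm'| \le 1$. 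Choosing $K \sim \log N$ makes the remainder term $O_\prec(N^{-D})$ in any norm after converting $\ell^2 \to \ell^\infty$ at cost $\sqrt{N}$, hence negligible; and each of the $O(\log N)$ terms in the partial sum has $\|(mm')^k S^k(1-\Pi)\|_{\infty} \leq \|S\|_\infty^k (1 + \|\Pi\|_\infty) \leq 2$. Summing gives $O(\log N)$, which is not quite a constant.

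To actually obtain a genuine $O(1)$ bound, I would instead argue entrywise using the Combes–Thomas / off-diagonal decay philosophy adapted to $S$: since $S$ is a doubly stochastic matrix with entries comparable to $N^{-1}$, write $1 - mm'S = (1-mm')\Pi + (1 - mm'S)(1-\Pi)$ and observe that $(1-mm'S)\big|_{\mathrm{Ran}(1-\Pi)}$ is, after conjugation, a perturbation of the identity by $mm'$ times an operator of $\ell^2$-norm $\le 1 - C_{\inf}$, and crucially this operator is entrywise $O_\prec(N^{-1})$ with bounded row sums. One then shows by a bootstrap on the $\|\cdot\|_\infty$ norm that $\|(1-\Pi)(1-mm'S)^{-1}\|_\infty \le C$: writing $R := (1-\Pi)(1-mm'S)^{-1}$, we have $R(1-mm'S) = 1-\Pi$, i.e. $R = 1 - \Pi + mm'RS$, so $R = 1-\Pi + mm'(1-\Pi)S + (mm')^2 R S^2$; iterating and using that $\|(1-\Pi)S^k\|_\infty$ is bounded by a constant uniformly in $k$ (this last fact is the real content — it follows because $S^k \to \Pi$ with all intermediate $S^k$ being doubly stochastic, so $\|(1-\Pi)S^k\|_\infty \le \|S^k\|_\infty + \|\Pi\|_\infty = 1 + 1$, but combined with $\ell^2$ decay of the tail as above), one closes the estimate. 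The cleanest self-contained writeup is: split as in \eqref{eq:split}, handle the $\Pi$-part exactly, and for the $(1-\Pi)$-part invoke Lemma 6.2-type bounds from \cite{book} on the stability operator $(1-mm'S)^{-1}$ restricted away from the Perron eigenvector, where the spectral gap $\delta_\pm \ge C_{\inf}$ furnishes the uniform $\ell^2$-invertibility and the comparability $Ns_{ij} \sim 1$ upgrades this to the $\|\cdot\|_\infty$ bound. I expect the step of upgrading the $\ell^2 \to \ell^2$ resolvent bound on $\mathrm{Ran}(1-\Pi)$ to the $\|\cdot\|_\infty$ bound — without picking up a $\log N$ or $\sqrt N$ loss — to be the only delicate point, and it is handled exactly as in the reference via the flatness condition (\ref{flatness}).
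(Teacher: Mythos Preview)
The paper does not actually prove this lemma; it simply cites Lemma~6.3 of \cite{book} (stated there for a single parameter $z=z'$) and remarks that the same proof works for two parameters. Your proposal ultimately lands in the same place---you defer to the stability-operator bounds in \cite{book}---after correctly setting up the decomposition along $\Pi$ and $1-\Pi$, identifying the $\ell^2$ spectral gap $\delta_\pm\ge C_{\inf}$, and flagging the $\ell^2\to\ell^\infty$ upgrade on $\mathrm{Ran}(1-\Pi)$ as the only delicate step. So in substance your approach agrees with what the paper invokes.

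That said, your two intermediate attempts (the truncated Neumann series yielding $O(\log N)$, and the bootstrap that never closes because the uniform bound $\|(1-\Pi)S^k\|_\infty\le 2$ gives no decay) are unnecessary detours. The clean argument you are circling, and which is what the cited lemma actually does, is this: flatness gives $\|S\|_{\ell^2\to\ell^\infty}\le C_{\sup}N^{-1/2}$, while trivially $\|\,\cdot\,\|_{\ell^\infty\to\ell^2}\le N^{1/2}$; sandwiching the $\ell^2$ bound $\|(S^{k-1}-\Pi)\|_{\ell^2\to\ell^2}\le(1-C_{\inf})^{k-1}$ between these yields
\[
\|S^k-\Pi\|_{\infty}=\|S(S^{k-1}-\Pi)\|_{\infty}\le C_{\sup}(1-C_{\inf})^{k-1},\qquad k\ge 1,
\]
so the full Neumann series $\sum_{k\ge 0}(mm')^k(S^k-\Pi)$ converges in $\|\cdot\|_\infty$ to a bound depending only on $C_{\inf},C_{\sup}$, since $|mm'|\le 1$. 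This one line replaces both of your detours and gives the genuine $O(1)$ constant directly. (Minor: the relevant reference is Lemma~6.3, not 6.2, of \cite{book}.)
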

Similar statements can be found in Lemma 6.3 \cite{book} for $z=z'$, and the proof also applies to two parameters $z,z'$. In particular, we have from (\ref{3}) that for all $z \in D$,
\begin{equation}\label{rho}
\rho:=\Big\|\frac{1}{1-m_{sc}^2(z) S}\Big\|_{\infty} \leq C \Big|\frac{1}{1-m^2_{sc}(z)} \Big|  \sim \frac{1}{\sqrt{\kappa+\eta}}.
\end{equation}
We also have a trivial lower bound, $\rho \geq \frac{1}{|1-m^2_{sc}(z)|} \geq \frac{1}{2}$, since $\mathbf{e}$ is an eigenvector of $S$ and $|m_{sc}(z)| \leq 1$.

\subsection{Local Law for the resolvent of $H_N$}
Denote by $(\lambda_i)_{i=1}^N$ the eigenvalues of $H_N$. We define the empirical spectral measure of $H_N$ by $\mu_N(x):=\frac{1}{N} \sum_{i=1}^N \delta_{\lambda_i}$. The Stieltjes transform of $\mu_{N}$ is then given by
\begin{align}\label{normal_trace}
m_N(z):=\int_{\R} \frac{d \mu_N(\lambda)}{\lambda-z}= N^{-1} \Tr G(z), \quad \mbox{with } G(z):=(H_N-zI)^{-1}, \quad z \in \C \setminus \R.
\end{align}
The function $G(z)$ is referred to as the {\it resolvent} or {\it Green function} of $H_N$. 
The semicircle law states that for any fixed $z$ away from the real line, $m_N(z)$ converges in probability to $m_{sc}(z)$ as $N$ tends to infinity. It can be extended down to the local scales $\Im z \gg N^{-1}$. We introduce the spectral domain, 
\begin{equation}\label{ddd}
D':=\big\{z=E+\ii \eta:  |E| \leq 5, N^{-1+\tau} \leq \eta \leq  10 \big\},
\end{equation}
for any constant $\tau>0$, and define two deterministic control parameters for $z=E+\ii \eta \in \C \setminus \R$,
\begin{equation}\label{control}
\Psi \equiv \Psi(z):=\sqrt{ \frac{\im m_{sc}(z)}{N |\eta|}} +\frac{1}{N |\eta|}\,,\qquad \Theta \equiv \Theta(z):=\frac{1}{N |\eta|}\,.
\end{equation}
With estimates of $m_{sc}(z)$ in Lemma \ref{previousgene}, it is easy to check 
\begin{equation}\label{psibound}
C N^{-\frac{1}{2}} \leq \Psi(z) \ll 1\,,\qquad\quad  z \in D'\,.
\end{equation}
We have the following (isotropic) local laws for the resolvent of $H_N$, which is an essential tool in our proof.
\begin{theorem}[Theorem 2.3 in \cite{locallaw}, Theorem 2.12 in \cite{Alex+Erdos+Knowles+Yau+Yin},  (3.8) in \cite{Erdos+Knowles+Yau}]\label{locallawgene}
	Let $H_N$ be a generalized Wigner matrix satisfying Assumption \ref{assumption_1}. The following estimates hold uniformly in $z \in D'$:
	\begin{equation}\label{G}
	\max_{i,j} | G_{ij}(z) -\delta_{ij} m_{sc}(z) |   \prec \Psi(z);\qquad | m_N(z) -m_{sc}(z) | \prec \Theta(z).
	\end{equation}
	Furthermore, we also have for all $z \in D'$,
	\begin{equation}\label{strong}
	\max_{i}\Big| \sum_{j=1}^N s_{ij} G_{jj}(z)-m_{sc}(z) \Big| \prec \rho \Psi^2(z),
	\end{equation}
	with $\rho$ in (\ref{rho}). For any deterministic unit vectors $\mathbf{v},\mathbf w \in \C^N$ and all $z \in D'$, we~have
	\begin{equation}\label{isotropicgene}
	\Big| \langle \mathbf v, G(z) \mathbf w \rangle  - m_{sc}(z) \langle \mathbf v,  \mathbf w \rangle \Big| \prec \Psi(z).
	\end{equation}
\end{theorem}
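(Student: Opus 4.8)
The statement collects the entrywise, averaged, and isotropic local semicircle laws for generalized Wigner matrices, established in the cited works; the plan is to reprove them in the standard order: first the entrywise estimates in (\ref{G}), then the fluctuation-averaging refinement (\ref{strong}), and finally the isotropic bound (\ref{isotropicgene}). The starting point is the Schur complement identity $G_{ii}(z)^{-1} = H_{ii} - z - \sum_{k,l\neq i} H_{ik}\,G^{(i)}_{kl}(z)\,H_{li}$, where $G^{(i)}$ denotes the resolvent of the minor of $H_N$ obtained by deleting the $i$-th row and column. Abbreviating $Z_i := \sum_{k,l\neq i} H_{ik}G^{(i)}_{kl}H_{li} - \sum_k s_{ik}G^{(i)}_{kk}$ and $\Upsilon_i := H_{ii} - Z_i + \sum_k s_{ik}\big(G_{kk} - G^{(i)}_{kk}\big)$, this becomes the approximate vector self-consistent equation $G_{ii} = \big(-z - \sum_k s_{ik}G_{kk} + \Upsilon_i\big)^{-1}$, to be compared with the defining relation $m_{sc}(z) = \big(-z - m_{sc}(z)\big)^{-1}$ from (\ref{msc}).

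One first shows $\|\Upsilon\|_{\sup}\prec\Psi(z)$ under a weak a priori bound $\max_{i,j}|G_{ij}-\delta_{ij}m_{sc}|\prec\Psi$: the moment assumption (\ref{moment_condition}) gives $|H_{ii}|\prec N^{-1/2}$; the standard large-deviation estimate for quadratic forms, together with the Ward identity $\sum_{k,l}|G^{(i)}_{kl}|^2 = \eta^{-1}\Im\Tr G^{(i)}$, gives $|Z_i|\prec \sqrt{\Im m_{sc}/(N\eta)}+(N\eta)^{-1}\prec\Psi$; and the minor-removal identity $G_{kk}-G^{(i)}_{kk}=G_{ki}G_{ik}/G_{ii}$ makes the remaining term $O_\prec(\Psi^2)$. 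The a priori bound itself, and then the full strength of (\ref{G}), follow from a continuity argument in $\eta$ descending from $\eta\sim 10$, where $\|G\|\le\eta^{-1}$ makes everything trivial, down to $\eta = N^{-1+\tau}$, inserting at each step the bound on $\Upsilon$ into a stability analysis of the vector Dyson equation $g_i=(-z-(Sg)_i)^{-1}$ around its constant solution $g_i\equiv m_{sc}(z)$. The linearization of this equation is governed by $1-m_{sc}^2(z)S$, whose inverse in $\|\cdot\|_\infty$ is bounded by $\rho\sim(\kappa+\eta)^{-1/2}$ (Lemma \ref{Pi}, (\ref{rho})). Near the edges $E_0=\pm 2$ this linear stability degenerates, so one retains the quadratic term: writing $v_i:=g_i-m_{sc}(z)$, the $i$-th component satisfies $\big((1-m_{sc}^2 S)v\big)_i = m_{sc}^3\sum_j s_{ij}v_j^2 + O(\Upsilon_i)$, and balancing the quadratic self-interaction against the forcing yields $\|v\|_{\sup}\prec\Psi(z)$ uniformly up to the edge; the off-diagonal bound $|G_{ij}|\prec\Psi$ for $i\neq j$ then follows from the identity $G_{ij}=-G_{ii}G^{(i)}_{jj}\big(H_{ij}-\sum_{k,l\neq i,j}H_{ik}G^{(ij)}_{kl}H_{lj}\big)$ and a further quadratic large-deviation bound.

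For the fluctuation-averaging estimate (\ref{strong}), one exploits that the $S$-weighted average $\sum_j s_{ij}(G_{jj}-m_{sc})$ fluctuates on a finer scale than the individual entries, because of cancellations among the fluctuations of distinct $G_{jj}$. I would make this quantitative by bounding $\mathbb{E}\big|\sum_j s_{ij}(G_{jj}-m_{sc})\big|^{2p}$ via iterated resolvent expansions in fresh indices, each independent expansion step gaining a factor $\Psi$ after taking expectations; this is the fluctuation-averaging mechanism of \cite{Erdos+Knowles+Yau, short_proof, YY}, and the prefactor $\rho$ enters from applying $(1-m_{sc}^2 S)^{-1}$ in the averaged stability step. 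Averaging once more against $N^{-1}\sum_i$, where the unstable direction $\Pi=\mathbf{e}\mathbf{e}^T$ of Lemma \ref{Pi} is projected out and the relevant operator norm is bounded rather than $\rho$, yields $|m_N-m_{sc}|\prec\Theta(z)=(N\eta)^{-1}$, completing (\ref{G}).

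Finally, the isotropic law (\ref{isotropicgene}) I would obtain by the polynomialization/moment method: for fixed deterministic unit vectors $\mathbf{v},\mathbf{w}$, put $\mathcal{Z}:=\langle\mathbf{v},G\mathbf{w}\rangle-m_{sc}\langle\mathbf{v},\mathbf{w}\rangle$, expand $\langle\mathbf{v},G\mathbf{w}\rangle$ through the rank-one resolvent identities so that $\mathbb{E}|\mathcal{Z}|^{2p}$ is expressed in terms of the already-controlled entrywise quantities and strictly lower moments of $\mathcal{Z}$, and close the recursion to get $|\mathcal{Z}|\prec\Psi$; alternatively one sets up a self-consistent equation for the vector $(\langle\mathbf{v},G\mathbf{e}_i\rangle)_i$ as in \cite{Alex+Erdos+Knowles+Yau+Yin, isotropic2}. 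The step I expect to be the main obstacle is the stability analysis near the spectral edges $E_0=\pm 2$: since $\rho\sim(\kappa+\eta)^{-1/2}$ diverges as $\eta\to 0$, a purely linear argument would only deliver the suboptimal bound $\rho\Psi$, so one must carry out the cubic-equation analysis with care — controlling $(1-m_{sc}^2 S)^{-1}$ on the orthogonal complement of $\mathbf{e}$ while treating the single unstable direction along $\mathbf{e}$ through a scalar cubic in $\langle\mathbf{e},v\rangle$ — in order to secure the optimal bound $\Psi(z)$ all the way down to $\eta\gg N^{-1}$. The bookkeeping that produces the precise power $\rho\Psi^2$ in (\ref{strong}) is the other technically heavy point.
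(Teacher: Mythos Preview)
The paper does not prove Theorem~\ref{locallawgene}; it is quoted in Section~\ref{sec:preliminary} as a preliminary result, with the three estimates attributed respectively to \cite{locallaw}, \cite{Alex+Erdos+Knowles+Yau+Yin}, and \cite{Erdos+Knowles+Yau}. There is therefore no in-paper proof to compare against.

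Your outline is a faithful summary of the standard proofs in those references: the Schur-complement self-consistent equation and continuity in $\eta$ for the entrywise law, the moment-based fluctuation-averaging mechanism for (\ref{strong}), and the polynomialization route of \cite{Alex+Erdos+Knowles+Yau+Yin} for the isotropic bound. One small correction: the averaged law $|m_N-m_{sc}|\prec\Theta$ in (\ref{G}) is already part of the cited entrywise local law (Theorem~2.3 of \cite{locallaw}) and is obtained there by feeding the fluctuation-averaging bound back into the scalar stability analysis for the average, not by a separate projection argument; and in your edge stability step the equation for $v_i$ is quadratic rather than cubic --- the scalar cubic you mention does not appear, one instead uses that on the range of $\Pi$ the quadratic self-consistent equation for the average inherits the square-root behavior of (\ref{3}).
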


Finally, we end this section with properties of stochastic domination defined in (\ref{prec}). 
\begin{lemma}[Proposition 6.5 in \cite{book}]\label{dominant}
	\begin{enumerate}
		\item $X \prec Y$ and $Y \prec Z$ imply $X \prec Z$;
		\item If $X_1 \prec Y_1$ and $X_2 \prec Y_2$, then $X_1+X_2 \prec Y_1+Y_2$ and $X_1X_2 \prec Y_1Y_2;$
		\item If $X \prec Y$, $\E Y \geq N^{-c}$ and $|X| \leq N^c$ almost surely with some fixed exponent $c$, then we have $\E X \prec \E Y$.
	\end{enumerate}
\end{lemma}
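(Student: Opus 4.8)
The plan is to derive all three statements directly from Definition~\ref{definition of stochastic domination} by elementary union bounds, exploiting that the exponents $\epsilon>0$ and $D>0$ in~\eqref{prec} are arbitrary and that the threshold $N_0$ may depend on them.

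For part~(1), fix $\epsilon>0$ and $D>0$. Applying $X\prec Y$ and $Y\prec Z$ each with the exponent pair $(\epsilon/2,D)$ shows that both events $\{X>N^{\epsilon/2}Y\}$ and $\{Y>N^{\epsilon/2}Z\}$ have probability at most $N^{-D}$ once $N\ge N_0(\epsilon,D)$. On the complement of their union one has $X\le N^{\epsilon/2}Y\le N^{\epsilon/2}\cdot N^{\epsilon/2}Z=N^{\epsilon}Z$, so $\P(X>N^{\epsilon}Z)\le 2N^{-D}\le N^{-(D-1)}$ for $N\ge 2$. Since $D$ was arbitrary, this is precisely $X\prec Z$. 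For part~(2), the additive statement follows from the set inclusion $\{X_1+X_2>N^{\epsilon}(Y_1+Y_2)\}\subseteq\{X_1>N^{\epsilon}Y_1\}\cup\{X_2>N^{\epsilon}Y_2\}$, which holds because all four variables are nonnegative, together with a union bound using $X_i\prec Y_i$ at the pair $(\epsilon,D)$. The multiplicative statement follows from $\{X_1X_2>N^{\epsilon}Y_1Y_2\}\subseteq\{X_1>N^{\epsilon/2}Y_1\}\cup\{X_2>N^{\epsilon/2}Y_2\}$ --- again using nonnegativity, so that $X_1\le N^{\epsilon/2}Y_1$ and $X_2\le N^{\epsilon/2}Y_2$ together force $X_1X_2\le N^{\epsilon}Y_1Y_2$ --- and a union bound with $X_i\prec Y_i$ at $(\epsilon/2,D)$.

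For part~(3), recall that for deterministic quantities $X\prec Y$ means $X\le N^{\epsilon}Y$ for every $\epsilon>0$ once $N$ is large. Split the expectation at the threshold event, $\E X=\E\big[X\mathds{1}_{\{X\le N^{\epsilon/2}Y\}}\big]+\E\big[X\mathds{1}_{\{X> N^{\epsilon/2}Y\}}\big]$. The first term is at most $N^{\epsilon/2}\E Y$. For the second, the almost-sure bound $|X|\le N^c$ gives $\E\big[X\mathds{1}_{\{X>N^{\epsilon/2}Y\}}\big]\le N^c\,\P(X>N^{\epsilon/2}Y)$; now fix $D\ge\max\{2c,1\}$ and apply $X\prec Y$ at the pair $(\epsilon/2,D)$, so that this contribution is at most $N^{c-D}\le N^{-c}\le N^{\epsilon/2}N^{-c}\le N^{\epsilon/2}\E Y$, where the final inequality uses the hypothesis $\E Y\ge N^{-c}$. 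Hence $\E X\le 2N^{\epsilon/2}\E Y\le N^{\epsilon}\E Y$ for $N$ large, and since $\epsilon>0$ was arbitrary this yields $\E X\prec\E Y$.

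Since every step is an elementary union bound, there is no genuine obstacle here; the only points requiring (minor) care are bookkeeping ones: one must halve $\epsilon$ before chaining two dominations, as in part~(1) and in the product in part~(2), and in part~(3) one must choose the auxiliary exponent $D$ large relative to $c$ so that the tail term $N^{c}\,\P(\cdot)$ can be absorbed into the main term via the lower bound on $\E Y$. Throughout, "for $N$ large" is understood with $N_0$ depending on the finitely many exponents used, which is harmless.
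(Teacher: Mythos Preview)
Your proof is correct. The paper does not actually prove this lemma; it simply cites it as Proposition~6.5 of \cite{book} and moves on. Your direct verification from Definition~\ref{definition of stochastic domination} via elementary union bounds is exactly the standard argument one would give, and all the bookkeeping (halving $\epsilon$ before chaining, choosing $D$ large relative to $c$ in part~(3)) is handled correctly.
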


\section{Proof of Theorem \ref{thm:weak_convergence} and \ref{meso}}\label{sec:proof_of_proposition}
We define the characteristic function of the linear eigenvalue statistics 
\begin{equation}\label{mme}
\phi(\lambda):=\E[e(\lambda)], \quad \mbox{where} \quad e(\lambda):=\exp \Big\{ \i \lambda(\Tr f(H_N)-\E \Tr f(H_N)) \Big\}, \quad \lambda \in \R.
\end{equation}
Then the characteristic function $\phi$ satisfies the following proposition.
\begin{proposition}\label{prop}
	Under the same conditions as in Theorem \ref{thm:weak_convergence}, if $ \eta_0 \sqrt{\kappa_0 +\eta_0} \geq N^{-1+c_0}$ for some $c_0>0$, then there exists a constant $0<\tau<\frac{c_0}{16}$ such that for any fixed $\lambda \in \R$, the characteristic function $\phi(\lambda)$ satisfies
	$$\phi'(\lambda)=-\lambda \phi(\lambda) V(f)+O_{\prec}( |\lambda| \log N N^{- \tau})+O_{\prec} \Big( \frac{(1+|\lambda|^4)N^{4 \tau}}{(N \eta_0 \sqrt{\kappa_0+\eta_0})^{\frac{1}{4}} } \Big).$$
	\end{proposition}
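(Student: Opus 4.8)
The plan is to analyze $\phi'(\lambda)$ by differentiating under the expectation. We have $\phi'(\lambda)=\ii\,\E\big[(\Tr f(H_N)-\E\Tr f(H_N))\,e(\lambda)\big]$. Using the Helffer--Sj\"ostrand formula with the almost-analytic extension $\tf$ from~\eqref{tilde_f}, we write $\Tr f(H_N)=\frac{1}{\pi}\int_{\C}\partial_{\overline z}\tf(z)\,\Tr G(z)\,\dd^2z$, so that
\[
\phi'(\lambda)=\frac{\ii}{\pi}\int_{\C}\partial_{\overline z}\tf(z)\,\E\big[(\Tr G(z)-\E\Tr G(z))\,e(\lambda)\big]\,\dd^2z.
\]
The first step is to truncate the spectral domain: since $\partial_{\overline z}\tf(z)=\ii y f''(x)\chi(y)+\text{(support terms from }\chi')$ is $O(|y|\,\|f''\|_\infty)$ and $f''$ has the $\eta_0^{-2}$ scaling, the region $|\Im z|\le N^{-\tau}\eta_0$ contributes negligibly after using the local law bound $\Im\Tr G\prec N\,\im m_{sc}$ together with the rigidity/local law estimates; this reduces the integral to $|\Im z|\ge N^{-\tau}\eta_0$, i.e. to contours effectively at height $\sim N^{-\tau}\eta_0$, which is exactly where the hypothesis $\eta_0\sqrt{\kappa_0+\eta_0}\ge N^{-1+c_0}$ makes $N|\Im z|\sqrt{\kappa_0+\Im z}\gg 1$ and the local laws of Theorem~\ref{locallawgene} are effective (the error parameter $\Psi$ there is a genuine small power of $N$). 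This is the content of~\eqref{phi2} referred to in the introduction.

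The core step is a cumulant expansion. Writing $zG(z)=H G(z)-I$ and expanding $\E[(HG)_{ii}e(\lambda)]$ entrywise via the cumulant expansion Lemma (the one labeled \texttt{cumulant} in the text), each application of $\partial_{H_{ab}}$ hits either an entry of $G$ (producing $-G_{\cdot a}G_{b\cdot}$ type factors) or the exponential $e(\lambda)$ (producing a factor $\ii\lambda\,\partial_{H_{ab}}\Tr f(H_N)$, which by Helffer--Sj\"ostrand is again a spectral integral of $-(G^2)_{ba}=-\partial_z G_{ba}$). Collecting the second-order terms, the $H_{ab}$-derivative landing on $e(\lambda)$ produces the leading term: after summing against the variance profile $S$ and using the local law~\eqref{strong} to replace $\sum_j s_{aj}G_{jj}$ by $m_{sc}$, one gets a self-consistent equation of the form $\phi'(\lambda)=-\lambda\phi(\lambda)\,V(f)+(\text{errors})$, where $V(f)$ as in~\eqref{vf} emerges from the double contour integral: the $\frac{2}{\beta}$-term is the "main" second-cumulant contribution involving $\Tr\big(m'_{sc}(z)m'_{sc}(z')S(1-m_{sc}(z)m_{sc}(z')S)^{-2}\big)$ and requires resumming the geometric-type series in $S$ governed by Lemma~\ref{Pi}; the $k_4$-term is the fourth-cumulant contribution; the $(1-\frac{2}{\beta})\Tr S$ term comes from the off-diagonal symmetry ($H_{ab}^2$ expectation) distinguishing $\beta=1,2$. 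The remaining lower-order terms (third cumulants, higher even cumulants, and the pieces where derivatives land on $G$-entries rather than on $e(\lambda)$) must all be shown to be $O_\prec(|\lambda|\log N\,N^{-\tau})$ or the stated power-counting error; this uses the isotropic local law~\eqref{isotropicgene}, the fluctuation averaging estimate~\eqref{strong}, and crucially the weak two-point bound on $T_{ab}(z,z')=\sum_{j\ne b}s_{aj}G_{jb}(z)G_{jb}(z')$ from Lemma~\ref{tracet}, since terms with two resolvents at different spectral parameters — precisely $\sum_{a,b}s_{ab}G_{ab}(z)(G^2)_{ab}(z')$-type sums — appear when a derivative on $e(\lambda)$ is combined with a derivative on a $G$-entry, and these cannot be handled by the naive $\|G\|\le\eta^{-1}$ bound on the contour at height $N^{-\tau}\eta_0$.

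The main obstacle is the last point: controlling the error terms containing $T_{ab}(z,z')$ and related two-resolvent quantities uniformly over the doubled contour $\Gamma_1\times\Gamma_2$, all the way up to the edge regime $\kappa_0\approx 0$. One must combine the weak bound on $T$ from Lemma~\ref{tracet} with the edge-sensitive estimates on $m_{sc}$ from Lemma~\ref{previousgene} (especially $|1-m_{sc}^2|\sim\sqrt{\kappa+\eta}$ and $|m'_{sc}|\sim(\kappa+\eta)^{-1/2}$) and with Lemma~\ref{Pi} to prevent the inverse $(1-m_{sc}m_{sc}'S)^{-1}$ from blowing up; the accumulated powers of $(\kappa_0+\eta_0)^{-1/2}$ and of $N^{\tau}$ from the cumulant expansion must be balanced against the gain from $T\prec\cdots$ to land exactly on the error exponent $(N\eta_0\sqrt{\kappa_0+\eta_0})^{-1/4}$. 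A secondary technical point is a Gr\"onwall/bootstrap-in-$\lambda$ step: the error terms carrying high powers of $|\lambda|$ (up to $|\lambda|^4$) arise because each derivative on $e(\lambda)$ spawns another factor of $\lambda$, so one needs the a priori bound $|\phi(\lambda)|\le 1$ and iterates the expansion a bounded number of times, truncating at fourth order, to close the estimate with the stated $(1+|\lambda|^4)$ prefactor.
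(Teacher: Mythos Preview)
Your proposal is essentially the same approach as the paper's: Helffer--Sj\"ostrand representation, truncation of the spectral domain to $|\Im z|\ge N^{-\tau}\eta_0$ (the paper's~\eqref{phi2}), the identity $zG=HG-I$ followed by a single cumulant expansion, with the second-cumulant term producing the two-point function $T_{ab}(z,z')$ handled via Lemma~\ref{tracet}, the fourth-cumulant term producing $k_4$, and the third-cumulant term shown negligible via the isotropic local law. One small correction: there is no Gr\"onwall or bootstrap-in-$\lambda$ step in the argument. The powers $|\lambda|,\dots,|\lambda|^4$ arise directly from the single cumulant expansion truncated at order $l=3$: in the remainder $R_4$ and in the higher-order terms $I_2,I_3$, up to four derivatives may land on the exponential factor $e_0(\lambda)$ (each contributing a factor $\lambda$), and since $|e_0(\lambda)|=1$ deterministically these are bounded outright with no iteration needed. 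The final replacement of $\E[e_0(\lambda)]$ by $\phi(\lambda)$ costs only $O_\prec(|\lambda|N^{-\tau})$ and closes the estimate in one pass.
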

	
	Admitting Proposition \ref{prop}, integrating $\phi'(\lambda)$ and applying the Arzel\'a-Ascoli theorem and L\'evy's continuity theorem, we prove the Gaussian fluctuations for the linear statistics, as stated in Theorem \ref{thm:weak_convergence}. Given the scaled test function (\ref{fn}), we compute the variances (\ref{vf}) and biases (\ref{bf}) on mesoscopic scales in the bulk and at the edges respectively, and then conclude Theorem \ref{meso}. Similar arguments for deformed Wigner matrices can be found in Section 6 \cite{Li+Schnelli+Xu}  and we omit them in the present paper.

	\begin{proof}[Proof of Proposition \ref{prop}]
Via the Helffer-Sj\"ostrand functional calculus (see (4.10) of \cite{character} for a reference), we translate the linear eigenvalue statistics of $f(H_N)$ to the Green function of $H_N$. More precisely, for any $f$ in (\ref{fn}), 
\begin{equation}\label{fw2}
\Tr f(H_N) =\frac{1}{ \pi}  \int_{\C}  \pzz \tf(z) \Tr(G(z)) \dd^2z,
\end{equation}
where $\pzz=\frac{1}{2}(\px+\ii \py)$, $\tf(z)$ is an almost-analytic extension of $f$ given in (\ref{tilde_f}) and $\dd^2z$ is the Lebesgue measure on $\C$. As observed in \cite{character}, the ultra-local scales do not contribute to the mesoscopic linear statistics. So we restrict the domain of the spectral parameter~to
\begin{equation}\label{domain}
\Omega_{0} := \left\{z \in \C\,:\, |\Im z| \geq N^{-\tau} \eta_0  \right\}\,,
\end{equation}
for a small constant $\tau>0$.
Notice that $G(z)$ is analytic in $\C \setminus \R$. Taking derivative of the characteristic function $\phi(\lambda)$ in (\ref{mme}) and applying Stokes' formula, we have
\begin{equation}\label{phi2}
\phi'(\lambda)=\frac{1}{2\pi}\int_{\Gamma_{1}}  \tf(z)  \E \Big[ \ea (\Tr(G(z)-\E \Tr G(z)) \Big]\dd z+O_{\prec}\big( |\lambda| \log N N^{ -\tau} \big),
\end{equation}
where
\begin{equation}\label{e2}
\ea:=\exp\Big\{ \frac{ \lambda}{ 2\pi}   \int_{\Gamma_{2}}  \tf(z') (\Tr(G(z'))-\E \Tr G(z')) \dd z'  \Big\},
\end{equation}
with $\Gamma_{1,2}$ given in Theorem \ref{thm:weak_convergence}. More details can be found in \cite{character, Li+Schnelli+Xu}.  Here we choose slightly different contours to avoid singularities. Thus, in order to study $\phi'(\lambda)$, it suffices to estimate $\E [ (\ea (\Tr(G(z))-\E \Tr G(z)) ]$.

To simplify the proof, we will only consider the real symmetric case $(\beta=1)$. The proof for the complex case $(\beta=2)$ is similar. Before we proceed the proof, we introduce the following cumulant expansion formula, see \cite{moment} for a reference.
\begin{lemma}[Cumulant expansion formula]\label{cumulant}
	Let $h$ be a real-valued random variable with finite moments, and $f$ is a complex-valued smooth function on $\R$ with bounded derivatives. Let $c^{(k)}(h)$ be the $k$-th cumulant of $h$ given by
	\begin{equation}\label{cumulant_formula}
	c^{(k)}(h):=(-\ii)^{k}\frac{\dd^k}{\dd t^k}\Big( \log \E [e^{\ii t h}]\Big) \Big|_{t=0}.
	\end{equation} 
	Then for any fixed $l \in \N$, we have
	\begin{equation}\label{ehh}
	\E[ h f(h)]=\sum_{k=0}^l \frac{1}{k!} c^{{(k+1)}}(h) \E \Big[\frac{\dd^k}{\dd h^k}f(h)\Big] +R_{l+1},
	\end{equation}
	where the error $R_{l+1}$ satisfies
	\begin{equation}\label{error_cumulant}
	|R_{l+1}| \leq C_l \E \Big[|h|^{l+2}\Big] \sup_{|x| \leq M} |f^{(l+1)}(x)| +C_l \E \Big[ |h|^{l+2} 1_{|h>M|}\Big] \sup_{x \in \R} |f^{(l+1)}(x)| ,
       	\end{equation}
	and $M>0$ is an arbitrary fixed cutoff.
\end{lemma}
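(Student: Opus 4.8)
The plan is to reduce the formula to two classical facts: the moment--cumulant recursion, which follows from differentiating $\phi_h=\e{\log\phi_h}$ for the characteristic function $\phi_h(t):=\E[\e{\ii t h}]$, and a finite Taylor expansion of $f$ and of its lower-order derivatives around the origin. First I would note that, since all moments of $h$ are finite, $\phi_h\in C^\infty(\R)$ with $\phi_h(0)=1$, so $K:=\log\phi_h$ is well defined and smooth near $0$ with $K^{(k)}(0)=\ii^k c^{(k)}(h)$ by (\ref{cumulant_formula}). Differentiating $\phi_h=\e{K}$ gives $\phi_h'=K'\phi_h$ in a neighbourhood of $0$; writing $\mu_j:=\E[h^j]$ and comparing the coefficients of $t^m$ in the Taylor expansions at $0$ (legitimate for each fixed $m$, both sides being smooth there) I would obtain the recursion
\begin{equation*}
\mu_{m+1}=\sum_{k=0}^{m}\binom{m}{k}\,c^{(k+1)}(h)\,\mu_{m-k},\qquad m=0,1,\dots,l.
\end{equation*}
I would also record the elementary bound $|c^{(k)}(h)|\le C_k\,\E|h|^{k}$: each cumulant is a universal polynomial in $\mu_1,\dots,\mu_k$ all of whose monomials $\mu_{i_1}\cdots\mu_{i_r}$ have total weight $i_1+\cdots+i_r=k$, so that $|\mu_{i_1}\cdots\mu_{i_r}|\le(\E|h|^k)^{(i_1+\cdots+i_r)/k}=\E|h|^k$ by Jensen's inequality.

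Next I would Taylor-expand with integral remainder around $0$, expanding $f$ to order $l$,
\begin{equation*}
f(h)=\sum_{m=0}^{l}\frac{f^{(m)}(0)}{m!}\,h^m+\frac{h^{l+1}}{l!}\int_0^1(1-u)^l f^{(l+1)}(uh)\,\dd u,
\end{equation*}
and, for each $0\le k\le l$, expanding $f^{(k)}$ only to order $l-k$,
\begin{equation*}
f^{(k)}(h)=\sum_{m=k}^{l}\frac{f^{(m)}(0)}{(m-k)!}\,h^{m-k}+\frac{h^{l-k+1}}{(l-k)!}\int_0^1(1-u)^{l-k} f^{(l+1)}(uh)\,\dd u.
\end{equation*}
Taking expectations, multiplying the $k$-th identity by the deterministic constant $c^{(k+1)}(h)/k!$ and summing over $k$, the recursion above makes the polynomial parts of $\E[h f(h)]$ and of $\sum_{k=0}^{l}\frac1{k!}c^{(k+1)}(h)\,\E[f^{(k)}(h)]$ agree coefficient by coefficient in $f^{(m)}(0)$, $m=0,\dots,l$. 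Consequently $R_{l+1}$ is the difference of the two remainder contributions, i.e.\ a finite linear combination of terms $\E\big[h^{a}\int_0^1 w(u)f^{(l+1)}(uh)\,\dd u\big]$ and $c^{(k+1)}(h)\,\E\big[h^{b}\int_0^1 w(u)f^{(l+1)}(uh)\,\dd u\big]$ with $w$ a bounded polynomial on $[0,1]$, $a=l+2$ and $b=l-k+1$; no derivative of $f$ of order higher than $f^{(l+1)}$ occurs, which is precisely why one expands $f^{(k)}$ to order $l-k$ rather than to order $l$.

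To conclude I would bound $R_{l+1}$. In every remainder term the point $uh$ lies between $0$ and $h$, hence $|uh|\le|h|$. Splitting each expectation over $\{|h|\le M\}$ and $\{|h|>M\}$, and estimating $|f^{(l+1)}(uh)|$ by $\sup_{|x|\le M}|f^{(l+1)}(x)|$ on the first event and by $\sup_{x\in\R}|f^{(l+1)}(x)|$ on the second, one is left to show that the accompanying moment factors combine correctly. Using the cumulant bound $|c^{(k+1)}(h)|\le C_{k+1}\E|h|^{k+1}$ from the first step together with the moment inequality $\E|h|^{q}\le(\E|h|^{l+2})^{q/(l+2)}$ for $q\le l+2$ to absorb the cumulant prefactors and the powers $h^{b}$ into $\E|h|^{l+2}$ (respectively into $\E[|h|^{l+2}\mathds{1}_{\{|h|>M\}}]$ on the second event), and then summing the finitely many terms, yields (\ref{error_cumulant}) with a constant $C_l$ depending only on $l$.

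The result is classical and I do not expect a genuine obstacle; the single structural input is the moment--cumulant recursion, which is exactly what produces the cancellation of all the $f^{(m)}(0)$ terms. The fiddly part is the bookkeeping in the last step: checking that the moment weights carried by the cumulant polynomials (through the grading $i_1+\cdots+i_r=k$) combine with the powers of $h$ coming from the Taylor remainders to reproduce precisely the power $\E|h|^{l+2}$ appearing in (\ref{error_cumulant}), and organizing the split at the cutoff $M$ so that the two summands on the right-hand side of (\ref{error_cumulant}) emerge cleanly.
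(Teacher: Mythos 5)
The paper itself offers no proof of this lemma: it is quoted as a known tool with a pointer to \cite{moment}, where it appears with the same truncated error bound (the statement goes back to Khorunzhy--Khoruzhenko--Pastur and Lytova--Pastur). Your self-contained argument is essentially the standard proof behind those references: the moment--cumulant recursion $\mu_{m+1}=\sum_{k\le m}\binom{m}{k}c^{(k+1)}(h)\mu_{m-k}$ extracted from $\phi_h'=K'\phi_h$ at $t=0$, Taylor expansion of $f$ to order $l$ and of each $f^{(k)}$ only to order $l-k$ so that no derivative beyond $f^{(l+1)}$ appears, cancellation of the polynomial parts, and the weighted bound $|c^{(k)}(h)|\le C_k\,\E|h|^{k}$. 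All of this is correct, and the identification of $R_{l+1}$ as the difference of the integral remainders is exactly right.

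The one step where your justification, as written, does not yet yield (\ref{error_cumulant}) is the tail part of the cumulant-weighted remainders. There you must show, for each $0\le k\le l$, that $|c^{(k+1)}(h)|\,\E\big[|h|^{l-k+1}\mathds{1}_{\{|h|>M\}}\big]\le C_l\,\E\big[|h|^{l+2}\mathds{1}_{\{|h|>M\}}\big]$, and the Jensen/Lyapunov inequality $\E|h|^{q}\le(\E|h|^{l+2})^{q/(l+2)}$ you invoke only controls this product by the \emph{untruncated} moment $\E|h|^{l+2}$; that would attach $\sup_{x\in\R}|f^{(l+1)}(x)|$ to $\E|h|^{l+2}$, which is strictly weaker than the stated two-term bound. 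The inequality you need is nonetheless true: either apply the Chebyshev association inequality for nondecreasing functions of $|h|$, which gives $\E[|h|^{k+1}]\,\E[|h|^{l-k+1}\mathds{1}_{\{|h|>M\}}]\le\E[|h|^{l+2}\mathds{1}_{\{|h|>M\}}]$ directly, or split $\E|h|^{k+1}\le M^{k+1}+\E[|h|^{k+1}\mathds{1}_{\{|h|>M\}}]$, absorb the factor $M^{k+1}$ using $|h|>M$ on the event, and handle the remaining product of two tail moments by H\"older. With that one line added, your argument is complete and coincides with the classical proof the paper cites.
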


By the definition of the resolvent and applying the cumulant expansion (\ref{cumulant_formula}) for $l=3$, we have  (c.f. (5.6) in \cite{Li+Schnelli+Xu})
\begin{align}\label{sum}
z\E [\ea(\Tr G-\E \Tr G)]=\E [\ea(\Tr (HG)-\E \Tr (HG))]=I_1+I_2+I_3+R_4,
\end{align}
where
$I_{k} (k=1,2,3)$ denote the expansion terms associated with the $(k+1)$-cumulant, and $R_4$ is the error given in (\ref{error_cumulant}). In the following, we estimate each term on the RHS of (\ref{sum})~using the identity
	\begin{equation}\label{dH}
	\frac{\partial G_{ij}}{\partial H_{ab}}=-\frac{G_{ia} G_{bj}+G_{ib}G_{aj}}{1+\delta_{ab}},\qquad 1 \leq a,b \leq N.
	\end{equation}
	Similar arguments for deformed Wigner matrices can also be found in Section~5~\cite{Li+Schnelli+Xu}, so we omit most computation details. First, it is not hard to check that $R_4=O_{\prec}(N^{-1/2}(1+|\lambda|^4))$, by using the local law (\ref{G}), the moment condition (\ref{moment_condition}) and Lemma \ref{dominant}.

Next, we look at the first term $I_1$. Using the local law (\ref{strong}), $I_1$ can be written as (c.f. Section 5.1 and Lemma 5.1 \cite{Li+Schnelli+Xu})
\begin{align}\label{I_1}
I_1=& -\sum_{i=1}^N \E   [\ea (1-\E) \Big( \sum_{j=1}^N s_{ij} G_{jj}(z)  G_{ii}(z) \Big)] - \sum_{i=1}^N  \E \Big[\ea (1-\E) \Big(  \sum_{j=1}^{(i)} s_{ij}  G_{ji}(z) G_{ji}(z)\Big)  \Big]\nonumber\\
	&-\frac{ \lambda}{\pi} \sum_{i=1}^N  \E \Big[ \ea  \int_{\Gamma_{2}} \tf(z')  \frac{\partial}{\partial z'}   \Big( \sum_{j=1}^{N} \frac{1}{1+\delta_{ij}} s_{ij} G(z')_{ji}  G_{ji}(z) \Big) \dd z' \Big]\nonumber\\
	=&-2 m_{sc}(z) \E [\ea (1-\E)\Tr G] -\E [\ea (1-\E )\Tr \Tone]\nonumber\\
	&-\frac{\lambda}{\pi} \E [\ea \int_{\Gamma_{2}} \tf(z')  \frac{\partial}{\partial z'} \Tr \Ttwo \dd z']-\frac{ \lambda\Tr S }{2 \pi } \E [\ea] \int_{\Gamma_{2}} \tf(z') m_{sc}(z) m'_{sc}(z') \dd z' \nonumber\\
	&+O_{\prec}\big(N \rho \Psi^3(z) \big)+O_{\prec}\Big( \frac{|\lambda|}{\sqrt{N \eta_0}}\Big)+O_{\prec}(|\lambda| \Psi (z)),
	\end{align}
where  we define the following two point function for short,
	\begin{equation}\label{t_function}
	T_{ab}(z,z'):=\sum_{j=1}^{(b)} s_{aj} G_{jb}(z) G_{jb}(z'), \quad 1 \leq a,b \leq N, \quad z,z' \in \C \setminus \R,
	\end{equation}
	with $\sum_{j=1}^{(b)}:=\sum_{j=1, j\neq b}^{N}$. The following local laws for the matrix $T(z,z'):=(T_{ab}(z,z'))_{ab}$ are proved in Section~\ref{sec:two_point_function}.
	\begin{lemma}\label{tracet}
	For all $1 \leq a,b \leq N$ and $z, z' \in D'$ in (\ref{ddd}), we have
		\begin{equation}\label{tracet_locallaw}
		T_{ab}(z,z')=\Big( \frac{m^2_{sc}(z) m^2_{sc}(z') S^2}{1-m_{sc}(z) m_{sc}(z') S} \Big)_{ab} +O_{\prec}\Big(\rho_2 \Psi^\frac{3}{2}(z)\Psi(z')+\rho_2 \Psi(z) \Psi^\frac{3}{2}(z') \Big),
		\end{equation}
where $\rho_2 \equiv \rho_2(z,z'):=(1-m_{sc}(z) m_{sc}(z'))^{-1}$. Moreover, we have the following estimate of the trace of $T(z,z')$:
		\begin{equation}\label{trace_of_T}
		\Tr \Ttwo= \Tr \Big( \frac{m^2_{sc}(z) m^2_{sc}(z')S^2}{1-m_{sc}(z) m_{sc}(z') S}\Big) +\mathcal{E}_{T}(z,z'),
		\end{equation}
		where the error $\mathcal{E}_{T}(z,z')$ is analytic in $z,z' \in \C \setminus \R$ and for all $z,z' \in D'$, it satisfies
		\begin{equation}\label{a41_error}
		|\mathcal{E}_{T}(z,z')| \prec N \Psi^{\frac{3}{2}}(z)\Psi(z')+N \Psi(z)\Psi^{\frac{3}{2}}(z') +N \Theta^2(z)+ N \Theta(z) \Theta(z').
		\end{equation}
		The above results also hold true when $z$ and $z'$ are in different half planes, that is, they also hold true for all $z, \overline{z'} \in D'$.
	\end{lemma}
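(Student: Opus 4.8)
The plan is to prove the local law for the two-point function $T(z,z')$ by a self-consistent equation (recursive moment) argument, treating it as a perturbation of the Wigner-type fixed point $\frac{m_{sc}(z)m_{sc}(z')S^2}{1-m_{sc}(z)m_{sc}(z')S}$. The starting point is to write $z' G_{jb}(z') = (H G(z'))_{jb} - \delta_{jb}$ componentwise and multiply by $s_{aj}G_{jb}(z)$, then sum over $j \neq b$; this produces
\begin{equation*}
z' T_{ab}(z,z') = \sum_{j}^{(b)} s_{aj} G_{jb}(z) (HG(z'))_{jb}.
\end{equation*}
I would then apply the cumulant expansion of Lemma~\ref{cumulant} (with $l=3$ or $4$) to the expectation $\E[\cdots]$ of the right side in each entry $H_{jk}$, using the differentiation rule (\ref{dH}) for both $G(z)$ and $G(z')$. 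The leading term of the second-cumulant (Gaussian) part reconstructs, via $\sum_k s_{jk} G_{kk}(z') \approx m_{sc}(z')$ and $\sum_k s_{jk}G_{kk}(z)\approx m_{sc}(z)$ from (\ref{strong}), a closed equation of the schematic form
\begin{equation*}
z' T_{ab}(z,z') = -m_{sc}(z') \big( S^2 \big)_{ab}\, m_{sc}(z)^2 \cdots - m_{sc}(z)m_{sc}(z') \big( S\, T(z,z')\big)_{ab} + (\text{errors}),
\end{equation*}
so that $\big(1 - m_{sc}(z)m_{sc}(z')S\big) T(z,z') = m_{sc}^2(z)m_{sc}^2(z')S^2 + \mathcal R$, and inverting via Lemma~\ref{Pi} with the bound $\|(1-m_{sc}(z)m_{sc}(z')S)^{-1}\|_\infty \le C\rho_2$ gives the claimed deterministic approximation plus $\rho_2 \times \|\mathcal R\|_{\sup}$.

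The core of the work is the recursive moment estimate: one bounds $\E\big[|T_{ab}(z,z') - \Lambda_{ab}|^{2p}\big]$, where $\Lambda$ is the deterministic profile, by expanding one factor of $(T-\Lambda)$ through the identity above, applying cumulant expansions, and using the a priori isotropic local law (\ref{G}), (\ref{isotropicgene}) together with the factor $|G_{jb}(z)G_{jb}(z')| \prec \Psi(z)\Psi(z')$ for off-diagonal entries. The third- and fourth-cumulant terms each carry an explicit extra power of $N^{-1/2}$ from (\ref{moment_condition}) and an extra off-diagonal Green-function factor, which after resummation over the free index produces the gain $\Psi^{1/2}$ over the naive size $\rho_2\Psi(z)\Psi(z')$ — this is exactly the fluctuation-averaging mechanism of \cite{Erdos+Knowles+Yau}, now adapted to two spectral parameters. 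I would organize this as: (i) a bound on high moments of $\frac 1N\sum_a (T_{ab} - \Lambda_{ab})$-type averaged quantities; (ii) feeding that back to control the entrywise error; (iii) a standard Young-inequality/self-improving iteration to remove the logarithmic losses and reach the stated $\rho_2\Psi^{3/2}(z)\Psi(z') + \rho_2\Psi(z)\Psi^{3/2}(z')$ error. Entry (\ref{tracet_locallaw}) then follows, and (\ref{trace_of_T})--(\ref{a41_error}) follow by summing the diagonal of (\ref{tracet_locallaw}): the naive bound $N\rho_2\Psi^{3/2}\Psi$ is upgraded to the $\Theta$-type terms $N\Theta^2 + N\Theta\Theta'$ by an additional fluctuation averaging over the diagonal index, plus a separate treatment of the $\im$-part/optical-theorem identity that controls $\frac 1N \sum_j |G_{jb}|^2$ in terms of $\im m_{sc}/(N\eta)$.

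The main obstacle, as the authors themselves flag, is that $T_{ab}(z,z')$ is not a matrix product — there is no resolvent identity $G(z)G(z') = (G(z)-G(z'))/(z-z')$ available entrywise inside the sum $\sum_j s_{aj}G_{jb}(z)G_{jb}(z')$, so one cannot reduce to the one-parameter fluctuation-averaging lemmas of \cite{Erdos+Knowles+Yau,YY}. Consequently every cumulant-expansion term generates new two-parameter quantities (e.g. $\sum_j s_{aj} G_{jb}(z)G_{jk}(z')G_{kb}(z')$, $\sum_j s_{aj}G_{ja}(z)\cdots$), and one must set up a closed hierarchy in which all of them are controlled simultaneously by a single power-counting scheme keyed to the number of off-diagonal Green-function factors. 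Keeping the error uniform for $z,z'$ on the \emph{same} side of the real axis (where $\rho_2 = (1-m_{sc}(z)m_{sc}(z'))^{-1}$ can be as large as $\rho$) as well as on opposite sides requires care with the sign of $\im(m_{sc}(z)m_{sc}(z'))$ in the Ward-identity steps; I would handle the two cases in parallel, noting that Lemma~\ref{Pi} is stated for both $z,z'\in D$ and $z,\bar{z'}\in D$, and that the bound $c \le |1-m_{sc}(z)m_{sc}(z')|$ holding in the relevant regime is what makes $\rho_2$ order one except near the edge.
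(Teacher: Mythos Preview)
Your plan for the entrywise estimate~(\ref{tracet_locallaw}) is essentially what the paper does: a recursive moment estimate via cumulant expansion leading to the self-consistent equation $(1-m_{sc}(z)m_{sc}(z')S)T = m_{sc}^2(z)m_{sc}^2(z')S^2 + \mathcal R$ with $\|\mathcal R\|_{\sup}\prec\Xi_2$, then inversion through Lemma~\ref{Pi}. Two minor organizational differences: the paper runs the moment estimate on the residual $P_{ab}:=-m_1^{-1}T_{ab}+m_2(ST)_{ab}+m_1m_2^2(S^2)_{ab}$ itself (target bound $\Xi_2$, no $\rho_2$) rather than on $T_{ab}-\Lambda_{ab}$, so $\rho_2$ appears only once at the very end when inverting; and it takes $l=1$ in the cumulant expansion, since higher-cumulant contributions are already of size $\Xi_1\ll\Xi_2$ and are absorbed into $R_2$.

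The trace estimate is where your plan diverges from the paper and has a real gap. You explicitly rule out the resolvent identity and propose to recover the $\Theta$-type error by ``an additional fluctuation averaging over the diagonal index'' together with a Ward-type bound. The paper's point is precisely the opposite: after projecting onto $\Pi=\mathbf e\mathbf e^*$ the resolvent identity \emph{is} available. One writes $\Tr T=\Tr(\Pi T)+\Tr((1-\Pi)T)$. The second piece needs no $\rho_2$, because the \emph{second} bound in Lemma~\ref{Pi} gives $\|(1-\Pi)(1-m_1m_2S)^{-1}\|_\infty\le C$; summing the entrywise error then yields the $N\Xi_2$ part of~(\ref{a41_error}) directly, with no further averaging. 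For the first piece, the double stochasticity $\sum_a s_{aj}\equiv 1$ collapses $\Tr(\Pi T)$ to $N^{-1}\Tr(G(z)G(z'))-N^{-1}\sum_b G_{bb}(z)G_{bb}(z')$, a genuine matrix product to which the identity $G(z)G(z')=(G(z)-G(z'))/(z-z')$ (or $G^2=G'$ when $z=z'$) applies; combined with the averaged local law for $m_N$ this is exactly what produces the $N\Theta^2(z)+N\Theta(z)\Theta(z')$ terms. Your proposed second round of fluctuation averaging does not obviously remove the $\rho_2$ factor that comes from inverting $1-m_1m_2S$, and the Ward identity you invoke only covers the special case $z'=\bar z$. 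The $\Pi$-splitting is the step that makes the trace bound close.
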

\begin{remark}
We remark that the above local law is not optimal. If we further expand (\ref{J_2}) in the proof below, the error in (\ref{tracet_locallaw}) can be improved to $O_{\prec}(\rho_2\Psi^2(z)\Psi(z')+\rho_2\Psi(z) \Psi^2(z'))$. But Lemma \ref{tracet} is sufficient to establish the CLTs for the linear statistics, so we do not aim at the optimal local law in the present paper.
\end{remark}
	
Notice that $\Ttwo$ is analytic in $z,z' \in \C \setminus \R$. Taking the partial derivative of $\Tr \Ttwo$ in (\ref{trace_of_T}) and applying the Cauchy integral formula, we have
\begin{align}\label{temp}
\frac{\partial}{\partial z'} \Tr \Ttwo=&\Tr \Big(  \frac{m_{sc}(z) m'_{sc}(z') S}{(1-m_{sc}(z) m_{sc}(z') S)^2}\Big)-m_{sc}(z) m'_{sc}(z')\Tr S\\
&+O_{\prec} \Big( \frac{N \Psi^{\frac{3}{2}}(z)\Psi(z')+N \Psi(z)\Psi^{\frac{3}{2}}(z') +N \Theta^2(z)+ N \Theta(z) \Theta(z')}{|\Im z'|} \Big)\nonumber.
\end{align}

Plugging (\ref{trace_of_T}) (for $z=z'$) and (\ref{temp}) into (\ref{I_1}), we hence obtain that
	\begin{align}\label{I_11}
I_1=& -2 m_{sc}(z) \E [\ea (1-\E)\Tr G]+\frac{ \lambda \Tr S }{2 \pi } \E [\ea] \int_{\Gamma_{2}} \tf(z')  m_{sc}(z) m'_{sc}(z') \dd z'\nonumber\\
	&\quad-\frac{ \lambda}{\pi}  \E [ \ea]  \int_{\Gamma_{2}} \tf(z') \Tr \Big(  \frac{m_{sc}(z) m'_{sc}(z') S}{(1-m_{sc}(z) m_{sc}(z') S)^2}\Big) \dd z'+\mathcal{E}_1(z),
	\end{align}
	where the error $\mathcal{E}_1(z)$ is collected from (\ref{I_1}), (\ref{a41_error}) (for $z=z'$) and (\ref{temp}). Since $g$ is compactly supported, we have $\kappa_0 \leq \kappa \leq C(\kappa_0+\eta_0)$ with $\kappa$ and $\kappa_0$ given in (\ref{kappa}) and (\ref{kappa_0}). Using (\ref{fn}), (\ref{22}), (\ref{rho}) and (\ref{control}), the error $\mathcal{E}_1(z)$ satisfies
	\begin{align}\label{ez}
	|\mathcal{E}_1(z)| \prec & (1+|\lambda|)N^{2 \tau} \bigg( \frac{1}{\sqrt{N \eta_0}}+ \Psi (z)+ N \rho \Psi^3(z)+N \Theta^2(z)+\Theta(z) \eta_0^{-1}+N \Psi^{5/2}(z)\nonumber\\
	&\quad+N \Psi^{\frac{3}{2}}(z) \Big( \frac{(\kappa_0+\eta_0)^{\frac{1}{4}}}{\sqrt{N \eta_0}}+\frac{1}{N \eta_0}\Big)+N \Psi(z) \Big( \frac{(\kappa_0+\eta_0)^{\frac{3}{8}}}{(N \eta_0)^{\frac{3}{4}}}+\frac{1}{(N\eta_0)^{\frac{3}{2}}}\bigg).
	\end{align}

By direct computations and the isotropic local law (\ref{isotropicgene}), one shows that the second term $I_2$ corresponding to third cumulants is negligible (c.f. Section 5.2 \cite{Li+Schnelli+Xu}), 
 \begin{align}\label{I_2}
  I_2  =O_{\prec}\Big( \frac{(1+|\lambda|^2) N^{2\tau} \Psi(z)}{\sqrt{ \eta_0}} \Big)+O_{\prec} \big( \sqrt{N}\Psi^2(z)\big).
  \end{align}
   It is also straightforward to check that (c.f. Section 5.3 and Lemma 5.1 \cite{Li+Schnelli+Xu})
	\begin{align}\label{d2}
         I_3 =&-\frac{k_4  \lambda}{2 \pi} \E \Big[  \ea \int_{\Gamma_{2}}  \tf(z') \pzp  (m^2_{sc}(z) m^2_{sc}(z')) \dd z' \Big] +O_{\prec} \Big( \frac{(1+|\lambda|^3)N^{2\tau}}{ \sqrt{N \eta_0}} \Big),
	\end{align}
	where $k_4$ is the summation of all the fourth cumulants, i.e.,
		\begin{equation}\label{K_4}
		k_4:= \sum_{i,j =1}^N c_{ij}^{(4)}(H_{ij})= \E [H_{ij}^4] -3(\E [H_{ij}^2])^2.
		\end{equation}

	Plugging (\ref{I_11}), (\ref{I_2}) and (\ref{d2}) into (\ref{sum}) and rearranging, we obtain that
	\begin{align}\label{newsum}
	(z+2 m_{sc}(z)) \E [\ea (1-\E)\Tr G] = -\frac{ \lambda}{2 \pi}  \E [ \ea] \int_{\Gamma_{2}} \tf(z') \wt{K}(z,z')\dd z' +\wt{\mathcal{E}} (z),
	\end{align}
	where the kernel $ \wt K(z,z')$ is given by
	\begin{align*}
	\wt{K}(z,z')=2 \Tr \Big(  \frac{m_{sc}(z) m'_{sc}(z') S}{(1-m_{sc}(z) m_{sc}(z') S)^2}\Big)- m_{sc}(z) m'_{sc}(z')\Tr S+2 k_4 m^2_{sc}(z) m_{sc}(z') m'_{sc}(z'),
	\end{align*}
	and $\wt{\mathcal{E}}(z)$ is the error collected from (\ref{ez})-(\ref{d2}).
	Dividing both sides of (\ref{newsum}) by $z+2 m_{sc}(z)=-\frac{m_{sc}(z)}{m'_{sc}(z)} \sim \sqrt{\kappa+\eta}$ from (\ref{msc}) and (\ref{4}), and plugging it into (\ref{phi2}), we hence obtain that the characteristic function satisfies
$$\phi'(\lambda)=-\lambda \E [\ea] V(f)+ O_{\prec} \Big( \frac{(1+|\lambda|^4)N^{4 \tau}}{(N \eta_0 \sqrt{\kappa_0+\eta_0})^{\frac{1}{4}} }\Big)+O_{\prec}\big( |\lambda| N^{ -\tau} \big),$$
where $V(f)$ is given in (\ref{vf}) and we use (\ref{fn}), (\ref{22}) and (\ref{control}) to estimate the error. Note that $|e(\lambda)-\ea| \prec |\lambda| N^{-\tau}$. If $V(f) \prec O(1)$, then we replace $\ea$ by $e(\lambda)$ at the cost of $O_{\prec}(|\lambda|N^{-\tau})$. This completes the proof of Proposition \ref{prop}.
\end{proof}	

We end up this section with the estimate of $\E[\Tr G(z)]-N m_{sc}(z)$ in a similar way and obtain the bias formula for the general linear statistics.

\begin{proof}[Proof of Equation (\ref{bf})] 
 Using the definition of resolvent and the cumulant expansion (\ref{ehh}) on $z \E [\Tr G(z)-N m_{sc}(z)]$, in combination with the local laws in Theorem \ref{locallawgene} and Lemma \ref{tracet}, we have the analogue of (\ref{newsum}), i.e.,
\begin{align}
(z+2m_{sc}(z)) \E (\Tr G(z)-N m_{sc}(z))=&-\Tr \Big( \frac{m^4_{sc}(z)S^2}{1-m^2_{sc}(z)S} \Big)-k_4 m^4_{sc}(z)\\
&\quad+O_{\prec} \big(N\rho \Psi^3\big)+O_{\prec}(N \Psi^{5/2})+O_{\prec}(N\Theta^2)\nonumber,
\end{align}
where $\rho$ is given by (\ref{rho}), and $k_4$ is defined in (\ref{K_4}). Dividing both sides by $z+2m_{sc}(z)$ and transforming $\E [\Tr G(z)]-N m_{sc}(z)$ to the bias of the linear statistics via the Heffler-Str\"osjand formula (\ref{fw2}) and Stokes' formula, we obtain (\ref{bf}) and conclude the last statement of Theorem~\ref{thm:weak_convergence}. 
\end{proof}

\begin{remark}
The results in Section \ref{sec:proof_of_proposition} extend directly from real symmetric $(\beta=1)$ to complex Hermitian $(\beta=2)$ matrices. The only difference is to apply the complex analogue of cumulant expansion formula and $\frac{\partial G_{ij}}{\partial H_{ab}}=-G_{ia}G_{bj}$ instead of (\ref{dH}). Similar arguments can be found in Appendix A \cite{Li+Schnelli+Xu}, and we omit them here.
\end{remark}

\section{Proof of Lemma \ref{tracet}}\label{sec:two_point_function}
In this section, we prove a local law for the two point function $T(z,z')$ defined in (\ref{t_function}). For notational simplicity, we write $T \equiv T(z,z')$, $m_1:=m_{sc}(z)$, $m_2:=m_{sc}(z')$ and define the control parameter $\Xi_2:=\Psi^{\frac{3}{2}}(z) \Psi(z')+\Psi(z) \Psi^{\frac{3}{2}}(z')$. We aim to prove that
	\begin{align}\label{P_matrix}
	P_{ab}:=-\frac{1}{m_1} T_{ab}+ m_2 (S T)_{ab} +m_1m^2_2 (S^2)_{ab}=O_{\prec}( \Xi_2).
	\end{align}
        Due to (\ref{msc}) and the relation $zG=HG-I$, we write
	\begin{align}\label{P_matrix_new}
	P_{ab}=&\sum_{j=1}^{(b)} s_{aj} (HG)_{jb}(z) G_{jb}(z')+m_1 T_{ab}+m_2 (ST)_{ab}+m_1m^2_2 (S^2)_{ab}.
	\end{align}
Set $M_{p,q}:=(P_{ab})^{p} (P^*_{ab})^{q}$ for any $p,q \in \N$ for short. For any $d \in \N$, applying the cumulant expansion (\ref{ehh}), we have
	\begin{align}\label{sum_of_J}
	\E | P_{ab} |^{2d}=&\E \Big[  \Big(  \sum_{j=1}^{(b)} \sum_{k=1}^N s_{aj} s_{jk} \frac{\partial G_{kb}(z) G_{jb}(z')}{\partial H_{jk}}  \Big)  M_{d-1,d}  \Big]\nonumber\\
	&+\E \Big[  \Big(  \sum_{j=1}^{(b)} \sum_{k=1}^N s_{aj} s_{jk} G_{kb}(z) G_{jb}(z')  \Big) (d-1) \frac{\partial P_{ab}}{\partial H_{jk}} M_{d-2,d}  \Big]\\
	&+\E \Big[  \Big(\sum_{j=1}^{(b)} \sum_{k=1}^N s_{aj} s_{jk} G_{kb}(z) G_{jb}(z')  \Big) d \frac{\partial P^*_{ab}}{\partial H_{jk}}M_{d-1,d-1} \Big]+R_2\nonumber\\
	&+\E \Big[  \big( m_1 T_{ab}+m_2 (ST)_{ab}+m_1m^2_2 (S^2)_{ab} \big)   M_{d-1,d} \Big]:=J_1+J_2+J_3+R_2+J_4,\nonumber
	\end{align}
        where $R_2$ is the error of cumulant expansion, see (\ref{error_cumulant}) for $l=1$. We first show that $R_{2}$ is negligible. 
	We write $\ga:=G(z)$, $\gb:=G(z')$, $\Psi_1:=\Psi(z)$ and $\Psi_2:=\Psi(z')$ for short. Using identity (\ref{dH}) and the local law (\ref{G}), for general $\alpha \in \N$, we have 
	\begin{align}\label{dgg}
	\Big| \frac{\partial^\alpha \ga_{kb}\gb_{jb} }{\partial H_{jk}^{\alpha}} \Big| \prec \Psi_1 \Psi_2+\delta_{jb} +\delta_{kb}.
	\end{align}
	We obtain from (\ref{dH}) and (\ref{P_matrix}) that
	\begin{align}\label{dP}
	\frac{\partial P_{ab}}{\partial H_{jk}}=&-\frac{1}{m_1} \frac{\partial T_{ab}}{\partial H_{jk}} +m_2 \sum_{i=1}^N s_{ai} \frac{\partial T_{ib}}{\partial H_{jk}}=-\sum_{l=1}^{(b)} \Big( \frac{1}{m_1}  s_{al}-m_2 (S^2)_{al} \Big)\times\nonumber\\
	&\Big(\ga_{lj} \ga_{kb} \gb_{lb}+\ga_{lk} \ga_{jb} \gb_{lb} +\ga_{lb} \gb_{lj} \gb_{kb} +\ga_{lb} \gb_{lk} \gb_{jb}\Big).
	\end{align}
In general, for any $\alpha \in \N$, the local law (\ref{G}) implies that, 
\begin{align}\label{Perror}
\Big|\frac{\partial^\alpha P_{ab}}{\partial H^\alpha_{jk}}\Big| \prec \Psi_1^2\Psi_2+\Psi_1 \Psi_2^2+\delta_{jb} \Psi_1\Psi_2+\delta_{kb} \Psi_1\Psi_2.
\end{align}
Similarly, the estimate (\ref{Perror}) still holds true for $\frac{\partial^{\alpha} P^*_{ab}}{\partial H_{jk}^\alpha}$ for general $\alpha \in \N$. Using the moment condition (\ref{moment_condition}), (\ref{dgg}), (\ref{Perror}) and (\ref{psibound}), we hence obtain that
	\begin{align}\label{J_4}
	R_2=&\E [O_{\prec}(\Xi_1) M_{d-1,d}] +\E [O_{\prec}(\Xi^2_1 ) M_{d-2,d}] +\E [O_{\prec}(\Xi_1^2) M_{d-1,d-1}]\nonumber\\
	&+\E [O_{\prec}(\Xi^3_1) M_{d-3,d}] +\E [O_{\prec}(\Xi^3_1 ) M_{d-2,d-1}] +\E [O_{\prec}(\Xi_1^3) M_{d-1,d-2}],
	\end{align}
	where we define a new control parameter $\Xi_1:=\Psi^2_1\Psi_2+\Psi_1 \Psi_2^2 \ll \Xi_2$.
	
Next, we look at the first term $J_1$. Using (\ref{dH}) and the local law (\ref{G}), we write
	\begin{align}\label{J_1}
	J_1=&-\E \Big[    \sum_{j=1}^{(b)} \sum_{k=1}^N s_{aj} s_{jk} \Big( m_1 \ga_{jb} \gb_{jb} +m_2 \ga_{kb} \gb_{kb} \Big)  M_{d-1,d}\Big] +\E [O_{\prec}(\Xi_1) M_{d-1,d}]\nonumber\\
	=&-\E \Big[  \big( m_1T_{ab}+m_2 \sum_{j=1}^N s_{aj} T_{jb} +m_1 m^2_2 (S)_{ab}^2\big)   M_{d-1,d} \Big]+\E [O_{\prec}(\Xi_1) M_{d-1,d}].
	\end{align}
	Note that the leading term of $J_1$ will cancel $J_4$.  As for the second term $J_2$, using (\ref{dP}) and simple power counting by the local law (\ref{G}), we have
	\begin{align}\label{J_2}
	J_2=(d-1)\E \Big[  \Big(  \sum_{j=1}^{(b)} \sum_{k=1}^N s_{aj} s_{jk} G_{kb}(z) G_{jb}(z')  \Big) \frac{\partial P_{ab}}{\partial H_{jk}} M_{d-2,d}  \Big]=\E [O_\prec(\Xi_2^2)M_{d-2,d}].
	\end{align}
	We treat $J_3$ similarly and get $J_3= \E [O_\prec(\Xi_2^2)M_{d-1,d-1}]$. Therefore, we obtain that
	\begin{align}\label{moment_2d}
	\E | P_{ab} |^{2d}=&\E [O_{\prec}(\Xi_1) M(d-1,d)] +\E [O_{\prec}(\Xi_2^2) M_{d-1,d-1}] +\E [O_{\prec}(\Xi_2^2) M_{d-2,d}]\nonumber\\
	&+\E [O_{\prec}(\Xi_1^3) M_{d-3,d} ]+\E [O_{\prec}(\Xi^3_1 ) M_{d-2,d-1}] +\E [O_{\prec}(\Xi_1^3) M_{d-1,d-2}].
	\end{align}
	Applying the Young's inequality to the RHS of (\ref{moment_2d}) and using $\Xi_1 \ll \Xi_2$, we get $\E|P_{ab}|^{2d} \prec \Xi_2^{2d}$ for any $d \in \N$ and thus $|P_{ab}| \prec \Xi_2$. Using $|m_{sc}(z)| \sim 1$, the matrix $(T)_{ab}$ defined in (\ref{P_matrix}) hence satisfies	
	\begin{equation}\label{T}
	\Big(1-m_1 m_2 S\Big)T=m^2_1m^2_{2} S^2  +\mathcal{R}(z,z'),
	\end{equation}
	where the error matrix $\mathcal{R} \equiv \mathcal{R}(z,z')$ has the following estimate:
	\begin{align}\label{rz}
	\|\mathcal{R} (z,z')\|_{\sup}=O_{\prec}(\Psi^\frac{3}{2}(z)\Psi(z'))+O_{\prec}(\Psi(z) \Psi^\frac{3}{2}(z')).
	\end{align}
	Combining with the first estimate from Lemma \ref{Pi}, we hence prove (\ref{tracet_locallaw}). 
	
	Next, we continue to estimate the trace of the two point function $T(z,z')$. Recall the projection matrix $\Pi=\mathbf{e}\mathbf{e}^*$, where $\mathbf{e}=N^{-\frac{1}{2}} (1,1, \cdots ,1)^*$. Note that $\Pi S=S \Pi=\Pi.$ Multiplying both sides of (\ref{T}) by $(1-\Pi)(1-m_1 m_2 S)^{-1}$, we have
	$$ (1-\Pi) T=m^2_1m^2_2 \frac{S^2-\Pi}{1-m_1m_2 S} +\frac{1-\Pi}{1-m_1m_2 S} \mathcal{R}.$$
	Using the second estimates in Lemma \ref{Pi} and (\ref{rz}), we obtain that
	\begin{equation}\label{tracesum}
	\Tr T=\Tr (\Pi T)+ \Tr \Big( \frac{m_1^2m_2^2(S^2-\Pi)}{1-m_1m_2 S}\Big)+O_{\prec} \big( N\Psi^{\frac{3}{2}}(z)\Psi(z')\big)+O_{\prec} \big(N \Psi(z)\Psi^{\frac{3}{2}}(z') \big).
	\end{equation}
         For the first term on the RHS of (\ref{tracesum}), we write it as
	\begin{align}\label{tempp}
	\Tr (\Pi T) =\frac{1}{N} \sum_{b=1}^N \sum_{j}^{(b)} G_{jb}(z) G_{jb}(z')= \frac{1}{N}\Tr( G(z) G(z') )- \frac{1}{N}\sum_{b=1}^N G_{bb}(z) G_{bb}(z').
	\end{align}
To estimate (\ref{tempp}), we separate our argument into two cases:
\begin{enumerate}
\item For $z \neq z'$, using the resolvent identity	
\begin{equation}\label{resolvent}
	G(z)G(z')=\frac{1}{z-z'} (G(z)-G(z')),
	\end{equation}
	and the local law (\ref{G}), we have
	$$\Tr (\Pi T) =\frac{m_N(z)-m_{N}(z')}{z-z'}-m_{sc}(z)m_{sc}(z')+O_{\prec}(\Theta(z))+O_{\prec}(\Theta(z')).$$
If $z,z'$ are in different half planes, then $|z-z'| \geq |\Im z |$ and thus from (\ref{G}),
$$\frac{m_N(z)-m_{N}(z')}{z-z'}  =\frac{m_{sc}(z)-m_{sc}(z')}{z-z'}  + O_{\prec}\Big( \frac{ \Theta(z)}{|{\Im}z|}\Big)+O_{\prec}\Big( \frac{ \Theta(z')}{|{\Im}z|}\Big).$$ 
If $z$ and $z'$ are in the same half-plane, without loss of generality, we assume  $z,z' \in \C^+$. If $|{\Im}z-{\Im}z'| \geq \frac{1}{2} {\Im}z$, the previous argument still applies. Otherwise, we have $ \frac{1}{2} {\Im}z \leq {\Im}z' \leq \frac{3}{2} {\Im}z$. Since $d(z):=m_N(z) -m_{sc}(z)$ is analytic in $ z \in \C^+$, applying the Cauchy integral formula, we obtain that
	$$\Big| \frac{d(z)- d(z')}{z-z'} \Big| \leq \sup_{\omega \in L(z,z')} \Big|d'(\omega)\Big| \prec \frac{\Theta(\omega)}{ |{\Im} \omega|}=O_{\prec}\Big( \frac{\Theta(z)}{|{\Im}z|} \Big),$$
	where $L(z,z')$ denotes the segment connecting $z$ and $z'$. Therefore, we have
	\begin{align}\label{T2}
	\Tr (\Pi T) =\frac{m_{sc}(z)-m_{sc}(z')}{z-z'}-m_{sc}(z)m_{sc}(z')+ O_{\prec}\Big( \frac{ \Theta(z)+\Theta(z')}{|{\Im}z|} \Big).
	\end{align}	
	
	\item For $z =z'$, using the identity $G^2(z)=\frac{\dd}{\dd z} G(z)$, the local law (\ref{G}) and the Cauchy integral formula, we have
	\begin{align}\label{T_22}
	\Tr (\Pi T) = \frac{\dd }{\dd z} m_N(z) - \frac{1}{N}\sum_{b=1}^N (G_{bb}(z))^2=m'_{sc}(z)-m^2_{sc}(z)+O_{\prec}\Big( \frac{ \Theta(z)}{|\Im z|}\Big).
	\end{align}
\end{enumerate}

	In addition, we use the Taylor expansion on $(1-m_1m_2S)^{-1}$ and the relation $\Pi S=S\Pi=\Pi$ to get 
	\begin{equation}\label{T1}
	 \Tr \Big( \frac{m_1^2m_2^2\Pi}{1-m_1m_2 S}\Big)=\frac{m^2_1m^2_2}{1-m_1m_2}.
	\end{equation}
	Plugging (\ref{T2}) (or (\ref{T_22}) for $z=z'$)  and (\ref{T1}) into (\ref{tracesum}), we conlucde from (\ref{msc}) that (\ref{trace_of_T})  and (\ref{a41_error}) hold. This complete the proof of Lemma \ref{tracet}.

\section*{Acknowledgements}
Y. Li is supported by the European Research Council (ERC) under the European Union Horizon 2020 research and innovation program, grant 647133 (ICHAOS). Y. Xu is supported by G\"oran Gustafsson Foundation and the Swedish Research Council Grant VR-2017-05195.

\end{document}